\DeclareMathOperator{\diag}{diag}
\DeclareMathOperator{\Spec}{Spec}
\DeclareMathOperator{\Frac}{Frac}
\DeclareMathOperator{\Trdeg}{Trdeg}
\DeclareMathOperator{\tr}{tr}
\DeclareMathOperator{\Ad}{Ad}
\DeclareMathOperator{\Ch}{char}
\DeclareMathOperator{\PI}{PI}
\newsavebox{\@brx}
\newcommand{\llangle}[1][]{\savebox{\@brx}{\(\m@th{#1\langle}\)}
  \mathopen{\copy\@brx\kern-0.5\wd\@brx\usebox{\@brx}}}
\newcommand{\rrangle}[1][]{\savebox{\@brx}{\(\m@th{#1\rangle}\)}
  \mathclose{\copy\@brx\kern-0.5\wd\@brx\usebox{\@brx}}}
\newtheorem{thm}{Theorem}[section]
\newtheorem{lem}[thm]{Lemma}
\newtheorem{prop}[thm]{Proposition}
\newtheorem{coro}[thm]{Corollary}
\newtheorem{defn}[thm]{Definition}
\newtheorem{remark}[thm]{Remark}
\newtheorem{problem}[thm]{Problem}
\begin{document}
\renewcommand{\thefootnote}{\fnsymbol{footnote}}
\footnotetext{\emph{2020 Mathematics Subject Classification:} Primary: 16R50, 13A99; Secondary: 13F20, 53D55, 16R30.}

\footnotetext{\emph{Keywords:} deformation quantization, free associative algebra, the Bergman centralizer theorem, generic matrices, trace algebra.}
\renewcommand{\thefootnote}{\arabic{footnote}}
\fontsize{12}{12pt}\selectfont
\title{\bf Centralizers in Free Associative Algebras and Generic Matrices}
\renewcommand\Affilfont{\itshape\small}
\author[2,3]{Alexei Belov-Kanel\thanks{kanel@mccme.ru}}
\author[3,4]{Farrokh Razavinia\thanks{f.razavinia@phystech.edu}}
\author[1]{Wenchao Zhang\thanks{Correspondence: zhangwc@hzu.edu.cn}}

\affil[1]{School of Mathematics and Statistics, Huizhou University, Huizhou, 516007, China}
\affil[2]{Department of Mathematics, Bar-Ilan University, Ramat Gan, 5290002, Israel}
\affil[3]{Department
of Discrete Mathematics, Moscow Institute of Physics and Technology, Dolgoprudnyi, Institutskiy Pereulok,
141700 Moscow, Russia}
\affil[4]{Department of Mathematics, Azerbaijan Shahid Madani University, East Azerbaijan Province, Tabriz, Iran}

\date{}

\maketitle

\renewcommand{\abstractname}{Abstract}
\begin{abstract}
This paper is concerned with the completion of the proof of  the Bergman centralizer theorem by using generic matrices based on our previous quantization proof \cite{KBRZh}. Additionally, we establish that the algebra of generic matrices with characteristic coefficients is integrally closed.

\end{abstract}

\section*{Introduction}

Let $K$ denote a base field and $A$ be an (associative) $K$-algebra. For an element $a\in A$, we denote by $C(a;A)$ the set of all elements of $A$ that commute with $a$. We say that $C(a;A)$ is the \textit{centralizer} of $a$ in $A$. The study of centralizers plays an important role in investigating the structure of algebras and there are numerous results in the literature relating to centralizers. The Bergman centralizer theorem \cite{Berg}, which states that the centralizer of any non-constant element in the free associative algebra is a polynomial algebra on a single variable, is one of the most significant and essential conclusions. This theorem plays a crucial role in studying algorithmic and combinatorial questions.

\medskip

Nonetheless, as far as we know, no (conceptual) new proof has been presented for this theorem since G. Bergman \cite{Berg} nearly fifty years ago. This has inspired us to take a fresh look at this result. We use a method of deformation quantization motivated by M. Kontsevich \cite{Kon03} to obtain an alternative proof of the Bergman centralizer theorem. In our previous work \cites{KBRZh}, we demonstrated that the centralizer of a non-scalar element in $K\langle X\rangle$ is a commutative subalgebra of transcendence degree one. A detailed proof of above result can be found in Zhang's thesis \cite{Zhang}.

For us, it was the most interesting part of the proof of the Bergman centralizer theorem. However, it is reasonable to provide the entire proof. To complete the argument, we invoke the following result due to P.M. Cohn \cite{Cohn}*{Prop. 2.1} which uses the Luroth theorem, i.e., any subalgebra of the algebra $K[x]$ is free if and only if it is integrally closed. Therefore, in this work, we will concentrate more on the demonstration of the integral closedness of centralizers. Our method, which differs from the conventional one in some ways, makes use of the $PI$-theory. From a classical perspective, it is more or less simple to demonstrate that the centralizer is of transcendence degree 1, but from the perspective of our approach, it is simpler to demonstrate that the centralizer is integrally closed. The proof of the integral closedness of the centralizer in this work is much concise than Bergman's initial proof (it uses invariant theory instead). For our approach, we still require a statement from Bergman \cite{Berg} (i.e., Proposition \ref{propberg}).

\medskip

Despite the profound noncommutative divisibility theorems of Cohn and Bergman, we adopt a characteristic-free method in our demonstration, which entails the use of generic matrices reduction and invariant theory for characteristic zero by C. Procesi \cite{Pro76} and for positive characteristic due to A. N. Zubkov \cites{Zubkov, Zubkov2} and S. Donkin \cites{Donkin, Donkin2}.

\medskip

We first think about the localization of the algebra of generic matrices by transferring centralizers of non-scalar elements from the free associative algebra case into the algebra of generic matrices.

\begin{thm}     \label{ThmGenMatrixdomain}
The algebra of generic matrices is a domain. Its localization as a skew field coincides with the localization of the algebra of generic matrices with traces (for positive characteristic -- with forms).
\end{thm}

The algebra of generic matrices is subsequently shown to be integrally closed.

\begin{thm}     \label{ThMain2}
  The algebra of generic matrices with characteristic coefficients is integrally closed.
\end{thm}

\medskip

Recently, N. Miasnikov considered an analog of the Bergman centralizer theorem for free group algebras in his really fascinating study \cite{Mia16}. He established that the centralizer of any non-scalar element of a free group algebra over a field is the coordinate ring of a nonsingular curve.

The paper is organized as follows. Theorem \ref{ThmGenMatrixdomain} is proved in Section \ref{GenMatrix} after a review of some fundamental results on generic matrices. The proof of our main Theorem \ref{ThMain2} is covered in Section \ref{cent-int} together with the integrally closedness of the algebra of generic matrices as well as centralizers of the free associative algebra. In Section \ref{final-proof}, we complete our revised proof of the Bergman centralizer theorem, and in Section \ref{Discourtions}, we discuss the rationality of degree one subfields in the fraction field of generic matrices.

\subsection*{Acknowledgments}
This work is a continuation of our work which has been carried out in \cite{KBRZh} by A. Belov-Kanel, F. Razavinia and W. Zhang. These authors contributed equally to this work. We thank U. Vishne and L. Rowen for useful discussions and we sincerely thank A. Elishev for revising our writing and providing rich discussions. We also thank the anonymous referee for valuable comments improving our manuscript.

\medskip

Alexei Belov-Kanel was supported by the Russian Science Foundation (Grant No. 22-11-00177). Wenchao Zhang was supported by the GuangDong Basic and Applied Basic Research Foundation (Grant No. 2022A1515110634), the Guangdong Provincial Department of Education (Grant No. 2021ZDJS080), and the Professorial and Doctoral  Scientific Research Foundation of Huizhou University (Grant No. 2021JB022). Farrokh Razavinia was partially supported and funded by the science foundation of Azerbaijan Shahid Madani University.

\section{Algebra of generic matrices}           \label{GenMatrix}

A \textit{generic matrix} is a matrix whose entries are distinct commutative indeterminates. Let $n$ be a positive integer, $s\geq 2$ be an integer and let $X_1,\dots, X_s$ be $n\times n$ matrices with entries $x^{(\nu)}_{ij}$ which are independent central variables. Then the $K$-subalgebra of $M_n(K[x_{ij}^{(\nu)}])$ generated by the matrices $X_{\nu}$ is called the \textit{algebra of generic matrices} and will be denoted by $K\langle X_1, \dots, X_s\rangle$ or $K\{X\}$ for short. The algebra of generic matrices is a basic object in the study of polynomial identities and invariants of $n\times n$ matrices and as we have seen already in \cite{KBRZh}, there is a canonical homomorphism $\pi$ from the free associative algebra to the algebra of generic matrices: $$\pi: K\langle X\rangle\to K\{X\}:=K\langle X_1,\dots, X_s\rangle,$$

\hspace{-0.6cm}and as an important property of $\pi$ which we will use frequently, is as follows:

An element $f$ in the free associative algebra is in the kernel of the map $\pi$, if and only if $f$ vanishes identically on $M_n(R)$ for every commutative $K$-algebra $R$, and this is true if and only if it vanishes identically on $M_n(K)$.

\medskip

Note that, the kernel of the map $\pi$ is a \textit{$T$-ideal}. A $T$-ideal is a completely characteristic ideal, i.e. stable under any endomorphism. Ascending chain conditions of $T$-ideals and their representability are discussed in \cite{Belovshpecht}, and the reference therein.

\medskip

In other words, let $h$ be an endomorphism of the free associative algebra $K\langle X\rangle$ (over $s$ generators), and let $I_s$ be the $T$-ideal of identities for the algebra of generic matrices of order $s$. Then we have $h(I_s)\subseteq I_s$ for all $s$ by the definition of $I_s$. Hence $h$ induces an endomorphism $h_{I_s}$ of
$K\langle X\rangle$ modulo $I_s$. If $h$ is invertible, then $h_{I_s}$ is invertible, but the converse is not true. This fact is proven in detail by \cite{BBRJ}, and regarding the $T$-ideals, please refer to \cite{EaKaFaW}*{Definition 5.3} and the computations afterward.

\medskip

Let us denote by $\overline{f}$ the image of the homomorphism $\pi$ of the nonscalar element $f \in K\langle X\rangle$ and by $\Frac(K\langle X\rangle)$ the skew field of fractions of the free associative algebra $K\langle X\rangle$. In our previous paper \cite{KBRZh}, we already proved that the centralizer $$C:= C(f;K\langle X\rangle)$$ of $f \in K\langle X\rangle\setminus K$ is a commutative $K$-subalgebra with transcendence degree one.

\medskip

We focus on the algebra of generic matrices of order $n=p$, where $p$ is a big enough prime number, throughout the entire paper. In our previous work \cite{KBRZh}*{Lemma 2.3}, we showed that if $n$ is big enough, $C$ can be embedded into the algebra of generic matrices of order $n$.

\medskip

When dealing with matrices in characteristic 0, it is useful to think that they form an algebra with a further unary operation $x\mapsto \tr(x)$, which is called the
$\textit{trace}$. This can be formalized as follows \cite{BPS1}:

\begin{defn}
An algebra with trace is an algebra equipped with an additional trace structure, that is a linear map $\tr: R\to R$ satisfying the following properties
$$\tr(ab) = \tr(ba),\quad a\tr(b) = \tr(b)a,\quad \tr(\tr(a)b) = \tr(a)\tr(b) \quad \text{for all}\; a,b\in R.$$
\end{defn}

\medskip

Our main motivation for studying traces, lies in our interest in the trace rings of generic matrices. Let $M_n$ be the variety of $n\times n$-matrices $X_1,\ldots,X_s$ defined at the beginning of this section. $(M_n)^m$ will be the $m$-fold product (over $\Spec K$) $M_n\times\ldots\times M_n$. Let $G=SL_n$. Then we define
$$K_T\{X\}:=T_{m,n}=\{f:(M_n)^m\to M_n\mid f~\text{polynomial~and} ~G-\text{equivariant}\}.$$
$T_{m,n}$ is a non-commutative ring (using the multiplication in $M_n$), and its center is given by 
$$Z_{m,n}=\{f:(M_n)^m\to \Spec K\mid f~\text{polynomial~and} ~G-\text{equivariant}\}.$$
In this case, $Z_{m,n}$ is the commutative and $T_{m,n}$ is the non-commutative trace ring of $m$ generic $n\times n$-matrices.  They were first extensively studied by M. Artin and C. Procesi. Artin and Schelter proved that the maximal ideals of $Z_{m,n}$ parameterize semisimple representations of dimension $n$ of the free algebra $\left\langle X_1,\ldots,X_s \right\rangle $, and the two-sided maximal ideals of $K_T\{X\}$ correspond to the simple components of such representations \cites{Artin69,Artin81}.

\medskip

As stated earlier, let's now demonstrate Theorem \ref{ThmGenMatrixdomain} for the algebra of generic matrices.

\begin{proof}[Proof of Theorem \ref{ThmGenMatrixdomain}]
The fact that the algebra $K\{X\}$ of generic matrices is a domain and its localization is a skew field was established by Amitsur \cite{Artin}. The only thing left to do is to demonstrate that the localization of $K\{X\}$ agrees with the localization of $K_T\{X\}$, the algebra of generic matrices with traces (for positive characteristic -- with forms). 

Consider the Hamilton-Cayley identity for $n\times n$ matrices

\begin{equation} \label{EqHamCelley}
  A^n-\xi_1(A)A^{n-1}+\cdots+(-1)^n\xi_n(A)=0,
\end{equation}

for $\xi_k(A)$ sum of main minors of $A$ of order $k$. 

Let $F(z,y_1,\dots,y_{n-1};\vec{t})$ be a polynomial, that is multi-linear and skew-symmetric with respect to 
$z,y_1,\dots,y_{n-1}$ and somehow dependant to variable $\vec{t}$. 

Let $$z=x^n,\ Y_k=\{y_1,\dots,y_{n-1}\}=\{x^{n-1},\dots,1\}\backslash\{x^{n-k}\}, \ n-1\ge k\ge 0.$$

Note well that if we replace $z$ with $x^n+(-1)^k\xi_k(x)x^{n-k}$ in $F(z,Y_k;\vec{t})$ we get zero due to skew symmetry and equation (\ref{EqHamCelley}), i.e.,
$$F\left(x^n+(-1)^k\xi_k(x)x^{n-k},Y_k;\vec{t}\right)=0.$$

On the other hand, since $\{x^{n-k}\}\cup Y_k=\{x^{n-1},\dots,x^0=1\}$ for the generic matrix $x$ are linear independent,  we have $F(x^{n-1},\dots,1;\vec{t})\ne 0$ for some $F$. We can put 
$$F(y_1,\dots,y_{n-1})=\sum_{\sigma\in S_n}(-1)^{|\sigma|} t_0y_{\sigma(1)}t_1y_{\sigma(2)}\dots t_{n-1}y_{\sigma(n)}t_n,$$ 

which means that 

$$\xi_k(x)=(-1)^{k+1}\frac{F(x^n,Y_k;\vec{t})}{F(x^{n-k},Y_k;\vec{t})},$$
and $F$ can be chosen such that the denominator $F(x^{n-k},Y_k;\vec{t})$ stays non-zero.  

And by doing this, we were able to express the characteristic coefficient as a fraction of ordinary polynomials, concluding the proof of Theorem \ref{ThmGenMatrixdomain}.
\end{proof}

\begin{remark}
\begin{enumerate}
    \item The ring of forms can be contained in the localization of central polynomials by selecting $F$ as a central polynomial in the proof of Theorem \ref{ThmGenMatrixdomain}. To accomplish this, we can consider $G=h_1 F h_2$ and replace the variable $G$ with a multi-linear central polynomial.
    \item It is simpler to argue with representation forms $\xi_k$ using trace polynomials and the Razmyslov identity if $\Ch(K)=0$:
  
  \begin{align*}
  & n\cdot \tr(A)\cdot \sum_{\sigma\in S_{n^2}}(-1)^{|\sigma|}
  x_{\sigma(1)}y_1x_{\sigma(2)}y_2x_{\sigma(3)}\ldots
  x_{\sigma(n^2-1)}y_{n^2-1}x_{\sigma(n^2)}\\&= \sum_{\sigma\in S_{n^2}}(-1)^{|\sigma|}
  Ax_{\sigma(1)}y_1x_{\sigma(2)}y_2\ldots y_{n^2-1}x_{\sigma(n^2)}
  \\&+\sum_{\sigma\in S_{n^2}}(-1)^{|\sigma|}
  x_{\sigma(1)}y_1Ax_{\sigma(2)}y_2\ldots
  y_{n^2-1}x_{\sigma(n^2)}\\& +\ldots \\&+\sum_{\sigma\in S_{n^2}}(-1)^{|\sigma|}
  x_{\sigma(1)}y_1x_{\sigma(2)}y_2\ldots y_{n^2-1}Ax_{\sigma(n^2)}.
  \end{align*}

\end{enumerate}
    
\end{remark}

The discovery attributed to Amitsur \cite{Artin}*{Theorem V.10.4} (or \cite{Zhang}*{Theorem 3.2}), which shows that the algebra of generic matrices is a domain and hence it has no zero divisors, is the reason why we select a prime number rather than an arbitrarily big enough integer.

\medskip

Then we have the following proposition:

\begin{prop}\label{propCH}
Let $K_T\{X\}$ be the algebra of generic matrices with traces (forms if the characteristic is positive). Let $R$ be its center and $\mathfrak{R}$ be the algebraic closure of the field of fractions of $R$.
\begin{enumerate}
\item[a)] Every minimal polynomial of $A\in K_T\{X\}$ is irreducible over $R$ in any characteristic. In particular, $A$ is diagonalizable over $\mathfrak{R}$.
\item[b)]  All eigenvalues $\lambda_i$ of $A\in K_T\{X\}$ are roots of the minimal polynomial of $A$, with correct multiplicities.
\item[c)] The characteristic polynomial of $A$ is a power of the minimal polynomial of $A$. 
\end{enumerate}
\end{prop}

\begin{proof}
{\bf a)} Otherwise $K_T\{X\}$ has zero divisors.

{\bf b)} By Cayley--Hamilton theorem, all zeros of the characteristic polynomial of $A$ are eigenvalues, then they are zeroes of the minimal polynomial $P(x)$ of $A$ which is minimal over $R$. Hence every irreducible component over $R$ of $Q(x)$ coincides with $P(x)$, and together with Theorem \ref{ThmGenMatrixdomain}, the statements {\bf b)} and {\bf c)} follow.
\end{proof}

A significant open problem that is well-known in the community can be stated as follows:

\begin{problem}
Whether for big enough $n$, every non-scalar element in the algebra of generic matrices has a minimal polynomial that always coincide with its characteristic polynomial.
\end{problem}

This is an important open direction, on which for small $n$, the Galois group of extension quotient field of the center of
algebra of generic matrices by eigenvalues of a non-scalar element of this algebra might not be the full symmetry group.
However, it still is unknown for big enough $n$. In this regard, some related problems have been discoursed in \cite{BRMY}.

\medskip

From Proposition \ref{propCH} (c), for $n=p$,  a large enough prime, we can obtain the following corollary.

\begin{coro}\label{mincha}
Let $K\{X\}$ be the algebra of generic matrices of a large enough prime order $n:=p$. Assume $A$ is a non-scalar element in $K\{X\}$, then the minimal polynomial of $A$ coincides with its characteristic polynomial.

\end{coro}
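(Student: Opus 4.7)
The plan is to combine Proposition \ref{propCH}(c) directly with the primality of $n=p$. By that proposition, the characteristic polynomial $\chi_A(t)$ of $A$ is a power of its minimal polynomial $\mu_A(t)$, say $\chi_A(t) = \mu_A(t)^m$ for some positive integer $m$, where both polynomials are taken over the quotient field of the center $Z(k\{X\})$. Comparing degrees immediately yields the numerical identity $m \cdot \deg \mu_A = n = p$. Since $p$ is prime, the only factorizations available are $(m, \deg \mu_A) = (1, p)$ or $(m, \deg \mu_A) = (p, 1)$.

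The first case is exactly the conclusion $\mu_A = \chi_A$. So the entire argument reduces to ruling out the second case, and here I would appeal directly to the non-scalar hypothesis on $A$. If $\deg \mu_A = 1$, then $\mu_A(t) = t - c$ for some $c$ in the quotient field of $Z(k\{X\})$, whence $A = c \cdot I$ in the ambient matrix algebra. This says $A$ is a scalar matrix, i.e., a central (scalar) element of $k\{X\}$, contradicting the assumption that $A$ is non-scalar.

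There is no real obstacle here; the deduction is a one-line consequence of Proposition \ref{propCH}(c) once $n$ is taken prime. The only point worth being explicit about is the meaning of "scalar" in $k\{X\}$: an element has minimal polynomial of degree one over $\Frac(Z(k\{X\}))$ precisely when it is a scalar matrix, i.e., lies in the center of $k\{X\}$, so excluding $\deg \mu_A = 1$ is exactly the content of the non-scalar hypothesis. This is also the reason the choice $n = p$ prime (rather than $n$ an arbitrary large integer) is essential: for composite $n$, the relation $m \cdot \deg \mu_A = n$ would admit intermediate factorizations, and one could not conclude without invoking the open problem stated just before the corollary.
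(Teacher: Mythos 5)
Your argument is essentially identical to the paper's: both start from Proposition \ref{propCH}(c) that $c(A) = m(A)^k$, compare degrees to get $k \cdot \deg m(A) = p$, and use primality together with the non-scalar hypothesis (which forces $\deg m(A) > 1$) to conclude $k = 1$. You spell out the scalar-case exclusion a bit more explicitly, but there is no substantive difference in approach.
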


\begin{proof}
Let $m(A)$ and $c(A)$ be the minimal and the characteristic polynomial of $A$, respectively. Note that
$\deg{c(A)}=n$, and $c(A)=(m(A))^{(k)}$. Because $A$ is not scalar, we have $\deg{m(A)}>1$ and since $n$ is prime and $k$
divides $n$, hence we have $k=1$.
\end{proof}

The following proposition holds:

\begin{prop}\label{ProPcomm}
Every generic matrix $B$ with same eigenvectors (defined over the tensor product of algebraic closure of center) as a diagonalizable matrix $A$ commutes with $A$.
\end{prop}

\begin{proof}
Consider $A, B\in M_{n\times n}(K)$. If $B$ has the same set of eigenvectors as a diagonalizable matrix, then $B$ has $n$ linearly independent eigenvectors. Since $A$ and $B$ are $n\times n$ matrices with $n$ eigenvectors, they both are diagonalizable and therefore $A=Q^{-1}D_AQ$ and $B=P^{-1}D_BP,$ where $Q$ and $P$ are matrices whose columns are eigenvectors of $A$ and $B$ associated with the eigenvalues listed in the diagonal matrices $D_A$ and $D_B$ respectively. But, according to the hypothesis, $A$ and $B$ have the same eigenvectors and this immediately will imply that $P=Q=:S$. Therefore $A=S^{-1}D_AS$ and $B=S^{-1}D_BS$, so $AB=S^{-1}D_ASS^{-1}D_BS = S^{-1}D_AD_BS$ and in a same way we have $BA= S^{-1} D_BD_AS$ and since $D_A$ and $D_B$ are diagonal matrices that commute, hence so do $A$ and $B$.
\end{proof}

\begin{prop}\label{propeigen}
Let $n$ be a prime number and $A$ be a non-scalar element of the algebra of generic matrices, then all eigenvalues of $A$ are pairwise different.
\end{prop}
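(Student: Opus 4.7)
The plan is to combine the corollary just established with Proposition \ref{propCH}. Since $A$ is non-scalar and $n=p$ is prime, the corollary yields that the minimal polynomial $m(x)$ of $A$ coincides with its characteristic polynomial $c(x)$; in particular both have degree $p$, and by part (a) of Proposition \ref{propCH} the polynomial $m(x)$ is irreducible over the base field $F$, where $F$ denotes the field of fractions of the center of $k\{X\}$.

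Next I would invoke part (b) of the same proposition: when $c(x)$ is factored into linear factors over the algebraic closure $\overline{F}$, every distinct eigenvalue of $A$ appears with the same multiplicity. Letting $\ell$ denote the number of distinct eigenvalues and $k$ their common multiplicity, we get $\ell k=\deg c(x)=p$. Since $p$ is prime, either $(\ell,k)=(p,1)$ or $(\ell,k)=(1,p)$; in the first case every eigenvalue has multiplicity $1$, i.e.\ they are pairwise distinct, and the proposition is established.

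The remaining task is to rule out $\ell=1$. In that situation $A$ would have a single eigenvalue $\lambda\in\overline{F}$ and $c(x)=(x-\lambda)^p$. Expanding the binomial, the coefficient of $x^{p-1}$ equals $-p\lambda$, and this coefficient lies in $F$ because the coefficients of the characteristic polynomial of $A$ are central (being, up to sign, the elementary symmetric functions in the eigenvalues, equivalently traces of exterior powers of $A$). Since $p$ is only required to be a sufficiently large prime, we may choose it coprime to $\mathrm{char}(k)$, so that $p$ is invertible in $F$; then $\lambda\in F$, forcing $x-\lambda$ to be a proper divisor in $F[x]$ of the degree-$p$ polynomial $m(x)$, contradicting the irreducibility of $m(x)$.

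The main obstacle is the exclusion of the inseparable case $\ell=1$; this is precisely where the freedom to choose $p$ distinct from $\mathrm{char}(k)$ enters essentially, and it is really the only place where the primality of $p$ is used beyond the combinatorial factorization $\ell k=p$. The rest of the argument is a direct application of Proposition \ref{propCH} together with the preceding corollary.
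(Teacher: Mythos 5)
The paper states Proposition \ref{propeigen} without giving an explicit proof, so there is no official argument to compare against; I can only assess your proposal on its own merits and against the surrounding machinery. Your argument is correct as far as it goes, and the factorization $\ell k = p$ via Proposition \ref{propCH}(b) together with the Corollary is exactly the right starting point. One small mismatch with the paper: the Corollary equating minimal and characteristic polynomial is stated for a \emph{big enough} prime, not for an arbitrary prime as in the statement of Proposition \ref{propeigen}, so strictly you are proving the result under the hypothesis actually used throughout the paper (which is fine, but worth flagging).

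Where you diverge from the most economical route is in ruling out $\ell = 1$. The paper's Proposition \ref{propCH}(a) already asserts that $A$ is diagonalizable; a diagonalizable matrix with a single eigenvalue is scalar, so non-scalarity immediately forces $\ell \geq 2$, hence $\ell = p$, $k = 1$. That one line replaces your entire third paragraph. Your alternative -- reading off $\lambda$ from the $x^{p-1}$ coefficient $-p\lambda$ of $(x-\lambda)^p$ and choosing $p$ coprime to $\mathrm{char}(k)$ to conclude $\lambda \in F$, contradicting irreducibility of the degree-$p$ minimal polynomial -- is a perfectly valid and in fact somewhat more self-contained argument, since it does not lean on the ``irreducible $\Rightarrow$ diagonalizable'' claim in \ref{propCH}(a), which in positive characteristic is really a separability assertion and is not automatic. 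So your route is longer but arguably more careful; the cost is the extra hypothesis $p \neq \mathrm{char}(k)$, which is harmless given that $p$ is a large prime to be chosen freely. Either way the conclusion is sound.
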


\begin{proof}
The result will directly follow from Proposition \ref{propCH} and Corollary \ref{mincha}.
\end{proof}

Proposition \ref{propeigen} implies the following results.

\begin{coro}
The set of all generic matrices commuting with $A$ consists of matrices which are all diagonalizable with $A$ simultaneously in the same eigenvector basis as in $A$.
\end{coro}

If $A$ is a non-scalar matrix, then we have the following result:

\begin{coro}
$A$ is a non-scalar element of the algebra of generic matrices $K\{X\}$, then every eigenvalue of $A$ is transcendental over $K$.
\end{coro}
\begin{proof}
We may assume that the ground field is algebraically closed. If there some eigenvalues are algebraic over $K$, then the minimal polynomial for $A$ will be decomposable, which contradicts Proposition \ref {propCH}.
\end{proof}

Consider a set $X$ of generators $a_1,\dots,a_s$ homogeneous of degree 1 of algebra of generic matrices. By {\em degree} $\deg(X)$ of an element of $X$ generic matrices we mean the degree of its highest component.  The following lemma is required.

\begin{lem}
    Let $A$ and $B$ be generic matrices and $P(B)=A$ for some polynomial $P$. Then $\deg(B)\le \deg(A)$. Moreover,  $ \deg(A) =\deg(P)\cdot \deg(B)$.
\end{lem} 

\begin{proof}
    Let $n=\deg(P)$. The highest term $A^n$ is  an $n$-th power of higest term of $B$, is non zero and has degree $n\cdot\deg(A)$.
\end{proof}

We require another crucial theorem by Amitsur and Levitzki \cite{Amit}.

\begin{thm}[Amitsur-Levitzki Theorem] Matrix rings of order $n$ are polynomial identity rings in a case that the smallest identity they satisfy has a degree exactly equal to $2n$.
\end{thm}

\section{Centralizers are integrally closed}\label{cent-int}
We attempted to provide a succinct definition of $C(a;A)$, the centralizer of $a$ in $A$, in the introduction. Let's begin this section by giving the topic a rather in-depth definition. For a ring $R$ and a subset $X\subseteq R$, we denote by $C(X;R)$ the set of all elements of $R$ which commute with every element of $X$. We say that $C(X;R)$ is the centralizer of $X$ in $R$, i.e., 
$$C(X;R)=\{r\in R: rx=xr, \forall x\in X\}.$$
If $X=\{a\}$, then we simply write $C(a;R)$ instead of $C(\{a\};R)$. Clearly $C(X;R)$ is a subring of R and it contains the center $Z(R)$. It is also clear that $C(X;R)=R$ if and only if $X\subseteq Z(R)$. We are only interested in $C(a;R)$ where $a\notin Z(R)$.
\subsection{Invariant theory of generic matrices}

In this subsection, We review some essential facts from the invariant theory of generic matrices.

\medskip

Consider the algebra $K\{X\}$ of $s$-generated generic matrices of order $n$ over the ground field $K$. Let $a_{\ell}=(a_{ij}^{\ell}), 1\leq i,j\leq n, 1\leq \ell \leq s$ be its generators. Let $R=K[a_{ij}^{\ell}]$ be the ring of entries coefficients. Consider an action of matrices $M_n(K)$ on matrices in $R$ by conjugation, namely
$$\varphi_M: B\mapsto M B M^{-1}.$$ It is well-known (cf. \cites{Pro76, Pro73, Zubkov2}) that the
invariant function on this matrix can be expressed as a polynomial over traces $\tr(a_{i1}\dots a_{is})$. Any
invariant on $K\{X\}$ is a polynomial of $\tr(a_{i1}\dots a_{in})$. Note that the conjugation on $M$
induces an automorphism $\varphi_M$ of the ring $R$. Namely, $M(a_{ij})^{\ell}M^{-1}=(a'_{ij}{}^{\ell})$,
and $\varphi_M(B)$ of $R$ induces automorphism on $M_n(R)$. And for any $x\in K\{X\}$, we have
$$\varphi_M(x)=M x M^{-1}=\Ad_M(x).$$ 
Consider $x=\Ad_M^{-1}\varphi_M(x)$. Then any element of the algebra of generic matrices is invariant under $\varphi_M$.

\medskip

There is a well-known fact that we can formulate as follows:

\begin{thm}
The algebra of generic matrices with trace is an algebra of concomitants, i.e. subalgebra of $M_n(R)$ which is invariant under the action $\varphi_M$.
\end{thm}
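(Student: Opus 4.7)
The plan is to decompose the action $\varphi_M$ on $M_n(R)$ into a coordinate change $\sigma_M$ of the polynomial ring $R$ together with the ordinary conjugation $\mathrm{Ad}_M$ on matrix entries, and then to verify invariance first on the generators $X_1,\dots,X_s$ before propagating it through the algebra operations and the trace. Concretely, the assignment $X_\nu\mapsto M X_\nu M^{-1}$ replaces each entry $x_{ij}^{(\nu)}$ by a $k$-linear form in the $x_{ij}^{(\nu)}$, so it induces a $k$-algebra automorphism $\sigma_M$ of $R=k[x_{ij}^{(\nu)}]$, which we extend entry-wise to a $k$-algebra automorphism of $M_n(R)$, still denoted $\sigma_M$. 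The defining feature of a generic matrix is then the identity
\[
\sigma_M(X_\nu) \;=\; M X_\nu M^{-1} \;=\; \mathrm{Ad}_M(X_\nu),
\qquad \text{i.e.} \qquad
(\mathrm{Ad}_M^{-1}\circ\sigma_M)(X_\nu) \;=\; X_\nu,
\]
which is precisely the content of the paragraph preceding the theorem.

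The next step is to upgrade this relation on the generators to the whole generic matrix algebra. Both $\sigma_M$ and $\mathrm{Ad}_M$ are $k$-algebra automorphisms of $M_n(R)$, hence so is their composition; because it fixes each $X_\nu$, it fixes every noncommutative polynomial $p(X_1,\dots,X_s)\in k\{X\}$. Unpacking, this says $\sigma_M(p)=M p M^{-1}$, which is exactly the concomitance identity, so $k\{X\}$ is a $\varphi_M$-stable $k$-subalgebra of $M_n(R)$. To pass to the trace closure, let $A\in k\{X\}$. The element $\tr(A)\in R$, viewed in $M_n(R)$ as a scalar matrix, is automatically fixed by $\mathrm{Ad}_M$, while cyclicity of the trace gives $\sigma_M(\tr(A))=\tr(\sigma_M(A))=\tr(M A M^{-1})=\tr(A)$. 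Hence the enlargement of $k\{X\}$ by the formal trace operation remains $\varphi_M$-stable, which is the desired statement.

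The main, and essentially only, obstacle is keeping the notation straight: one must carefully separate the coordinate automorphism $\sigma_M$ of the commutative ring $R$ (extended entry-wise) from the conjugation $\mathrm{Ad}_M$ acting on matrices in $M_n(R)$, and then recognize that the very definition of a generic matrix forces these two operations to agree on the generators $X_\nu$. Once that compatibility is stated cleanly, closure under sums, products and trace is a purely functorial consequence of the facts that $\sigma_M$ and $\mathrm{Ad}_M$ are algebra automorphisms and that the trace is cyclic and $k$-linear, so no further input is needed.
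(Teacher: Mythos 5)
The paper does not actually prove this theorem: the sentence immediately following it reads ``This theorem was first proved by C. Procesi\ldots'' and then cites Donkin, Zubkov, and de Concini--Procesi for the positive-characteristic and characteristic-free versions. So there is no in-paper argument to compare against --- the statement is a citation to the first fundamental theorem for matrix concomitants, which is a genuine piece of classical invariant theory.

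Your argument establishes only the \emph{easy} inclusion: every trace polynomial in the generic matrices is a concomitant, i.e.\ is fixed by $\mathrm{Ad}_M^{-1}\circ\sigma_M$ for all $M$. That part is correct and is essentially a rephrasing of the paragraph preceding the theorem in the paper. But the content of the theorem --- the part Procesi actually proved, and the part the paper uses two paragraphs later to conclude that the trace algebra is integrally closed --- is the reverse inclusion: \emph{every} $GL_n$-equivariant polynomial map $M_n(k)^s\to M_n(k)$ lies in the trace algebra. Your closing remark that ``no further input is needed'' is precisely the gap: proving the reverse inclusion requires real representation-theoretic input (Schur--Weyl duality or the classical FFT for $GL_n$, via the identification of multilinear concomitants with elements of the group algebra of the symmetric group). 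A purely functorial argument of the kind you give can never produce it, because it never uses any structure theory of the $GL_n$-action. Relatedly, your proof is characteristic-free only because you are proving the trivial direction; in positive characteristic the theorem as stated with traces alone is \emph{false}, and one must pass to coefficients of characteristic polynomials (Donkin, Zubkov), which is why the paper makes a point of citing those works right after the theorem.

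So: what you wrote is a correct verification that $k\{X\}$ with trace sits \emph{inside} the algebra of concomitants, but it is not a proof of the theorem, whose substance is that this inclusion is an equality.
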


This theorem was first proved by C. Procesi \cite{Pro76} for the ground field $K$ of characteristic zero. If $K$
is a field of positive characteristics, we have to use not only traces but also characteristic polynomials and their
linearization (cf. \cites{Donkin,Donkin2}). Relations between these invariants were discovered by C. Procesi
\cites{Pro76, Pro73} for characteristic zero and A. N. Zubkov \cites{Zubkov, Zubkov2} for characteristic $p$.
C. de Concini and C. Procesi also generalized the characteristic-free approach to invariant theory \cite{ConcProc}. For further information and background on this topic, we recommend referencing \cite{ConcProc1}.

\medskip

Now let us denote by $K_T\{X\}$ the algebra of generic matrices with traces (characteristic coefficients of {\it forms} for
positive characteristics).

\begin{prop}\label{ICwtr}
Let $n$ be a prime number, then the centralizer of $A\in K_T\{X\}$ is integrally closed in $K_T\{X\}$.
\end{prop}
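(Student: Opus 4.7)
The plan is to reduce both properties to the single assertion $[A,B]=0$: once this holds, $B\in C:=C(A;k\{X\})$ by definition of the centralizer.

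For the rational part, I interpret the claim as: if $B\in k\{X\}$ satisfies $BQ=P$ for some $P,Q\in C$ with $Q\ne 0$, then $B\in C$. Multiplying on each side by $A$ and using $AP=PA$, $AQ=QA$ (valid since $P,Q\in C$) yields $(BA-AB)Q=0$; since $k\{X\}$ is a domain by Amitsur (recalled above), this forces $[A,B]=0$, and hence $B\in C$.

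For the integral part, suppose $B\in k\{X\}$ satisfies $B^m+c_{m-1}B^{m-1}+\cdots+c_0=0$ with all $c_i\in C$. I would pass to an algebraic closure $L$ of $\tilde Z:=\Frac(Z)$, where $Z$ is the center of $k\{X\}$, so that $k\{X\}\otimes_Z L\cong M_n(L)$. By Proposition \ref{propeigen}, $A$ becomes $\diag(\lambda_1,\dots,\lambda_n)$ with pairwise distinct transcendental eigenvalues, and each $c_i\in C$ is simultaneously diagonalized in the same basis with diagonal entries $\gamma_i^{(r)}$. Writing $B=(b_{rs})$ in the $A$-eigenbasis, the diagonality of each $c_i$ makes $p(B)=0$ decouple into row-wise relations
\begin{equation*}
(B^m)_{rs}+\sum_{\ell=0}^{m-1}\gamma_\ell^{(r)}(B^\ell)_{rs}=0\quad\text{for all } r,s.
\end{equation*}
I would then use the weight decomposition of $M_n(L)$ under the maximal torus $T\subset GL_n$ centralizing $A$: the off-diagonal entries of $B$ live in non-zero $T$-weight spaces, while the $\gamma_\ell^{(r)}$ and the whole system are $T$-invariant. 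Combined with the description of $k\{X\}$ with trace as the algebra of concomitants (Procesi in characteristic $0$ and Zubkov/Donkin in positive characteristic, all cited earlier), this should force $b_{rs}=0$ for $r\ne s$, i.e.\ $[A,B]=0$, and therefore $B\in C$.

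The main obstacle will be this last weight-vanishing step: rigorously excluding non-zero off-diagonal weight components of $B$ given only the row-wise annihilation. The cleanest route I foresee combines the distinctness and transcendence of the $\lambda_i$ (Proposition \ref{propeigen}) with the concomitants description, controlling precisely which $T$-weight components of elements of the generic matrix algebra can satisfy equations whose coefficients are tied to one specific row index $r$, and hence to one specific eigenvalue $\lambda_r$ of $A$.
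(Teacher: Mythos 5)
Your argument for rational closure is correct and pleasingly elementary: from $BQ=P$ with $P,Q\in C$ one gets $ABQ = AP = PA = BQA = BAQ$, hence $(AB-BA)Q=0$, and since $k\{X\}$ is a domain (Amitsur, recalled earlier in the paper) and $Q\neq 0$, this forces $[A,B]=0$ and so $B\in C$. Note that the paper itself states this proposition with no explicit proof at all --- only the phrase ``After above discussions, we have the following proposition,'' leaning implicitly on the preceding concomitants/invariant-theory material --- so your domain argument for the rational half is actually more direct than anything in the text.

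For integral closure, however, the gap you flag at the end is genuine and the argument as sketched does not close it. The row-wise relations
\[
(B^m)_{rs}+\sum_{\ell=0}^{m-1}\gamma_{\ell}^{(r)}\,(B^{\ell})_{rs}=0
\]
do not in themselves force $b_{rs}=0$ for $r\neq s$: each $(B^{\ell})_{rs}$ with $\ell\geq 2$ is a polynomial in \emph{all} the entries of $B$, so the system is not decoupled in the off-diagonal entries. At the level of abstract matrices there is simply no such implication: $B=\left(\begin{smallmatrix}0&a\\ b&0\end{smallmatrix}\right)$ satisfies the monic equation $B^2-ab\,I=0$ whose coefficient is diagonal, yet $B$ is purely off-diagonal. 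Any correct argument must therefore exploit that $B$ is a \emph{generic} matrix, i.e.\ a noncommutative polynomial in the generators $X_{\nu}$, and that is exactly what the $T$-weight step you invoke does not yet do. Under the weight decomposition $B=\sum_{\alpha}B_{\alpha}$ relative to the torus centralizing $A$, the weight-$\alpha$ component of $p(B)=0$ constrains only the sums $\sum B_{\alpha_1}\cdots B_{\alpha_m}$ over $\alpha_1+\cdots+\alpha_m=\alpha$, not the individual $B_{\alpha}$; nothing in the decomposition alone yields $B_{\alpha}=0$ for $\alpha\neq 0$. Some genuinely new input --- for instance the integral closedness of the generic matrix algebra with trace as an algebra of concomitants, which the paper mentions in passing but never combines with the row-wise setup you have in mind --- is still required to finish the integral half.
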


\begin{proof}
  Due to the results of the next section \ref{GenMatrIntClosed}, the  algebra of generic matrices with trace (with forms in positive characteristic case)  is integrally closed, and therefore it is only necessary to prove that the rational closure of the centralizer is within the algebra of generic matrices with trace. But in a domain, the fraction of two elements commuting with $x$ also commutes with $x$ and hence we are done.
\end{proof}

\subsection{Algebra of generic matrices with traces (forms) is integrally closed}   \label{GenMatrIntClosed}

Our main goal in this subsection is to prove Theorem \ref{ThMain2}.  Recall the following result:

\medskip

{\bf Theorem} {\it
  Let $K$ be a field of characteristic zero. Then the algebra of generic matrices with trace over $K$ is integrally closed and moreover, if $\Ch(K)>0$ then the algebra of generic matrices with forms (i.e. characteristic coefficients) over $K$ is integrally closed.}

\medskip

\subsubsection{Realization of the algebra of generic matrices as a generic algebra over a skew field}

Let $K$ be an algebraically closed field.  Consider the ring $K\{X\}$ of $n\times n$ generic matrices over $K$,
with $s$ number of generators. Let $Z$  be its center and $F$ the field of fractions of $Z$. Then $Q=F\otimes K\{X\}$ is a skew field, $n^2$-dimensional over $F$ and it is $PI$-equivalent to $K\{X\}$. One can naturally define traces (forms) on $Q$.

Now we consider \emph{the algebra of generic elements} of $Q$. It is isomorphic to the algebra of generic matrices over $F$. Naturally, it is also possible to define the algebra of generic elements of $Q$ with traces (forms).

The construction of the algebra of generic elements proceeds analogously to the construction of generic matrices. Let
$e_1,\dots,e_{n^2}$ be an $F$-basis of $Q$. Consider the set of variables $x_{i}^{(k)}$, $i=1,\dots,n^2$, $k=1,\dots,s$. Let
$$A_k=\sum_{i=1}^{n^2} e_ix_{i}^{(k)},$$
and consider $A=F[A_1,\dots,A_s]$; this is \emph{the algebra of generic elements} over $F$. One can define an algebra of generic elements of any finitely dimensional algebra $B$ (not necessary associative) over an infinite field\footnote{Constructions of generic algebras over a finite field is discussed in \cite{BVR}.}. Such algebra is always relatively free. Two generic algebras for $B_1$ and $B_2$ are isomorphic if and only if $B_1$ and $B_2$ are $\PI$-equivalent. The algebra of generic matrices of order $n$ with $s$ generators over $F$ is therefore naturally isomorphic to our generic elements algebra of a skew field $Q$.

\subsubsection{Algebra of generic elements of a skew field is integrally closed}

We seek to establish the integral closure property of the algebra of generic elements of a skew field in this subsection.

Consider the algebra $Q\otimes F(y_1,\dots,y_m)$, which is a skew field. Any element $r$ of this skew field can be represented as
$$r=\sum_{i=1}^{n^2}e_iP_i/Q_i,$$
where $P_i$, $Q_i$ do not have common divisors. Let $R$ be the least common multiple of $Q_i$. We call $R$ a {\it denominator} of $r$. We continue with some observations:

\begin{lem}    \label{LeQotimesDomain}
\begin{enumerate}[label=\alph*)]
    \item Let $D$ be a commutative domain over $F$. Then $Q\otimes_FD$ is a non-commutative domain, its skew field localization is isomorphic to $Q\otimes_F{\cal D}$, where $\cal D$ is a field of fractions of $D$.
    \item Let $D$ be a commutative domain over $F$ and $I\triangleleft D$ a prime ideal. Then $Q\otimes_FD/(I\cdot Q\otimes_FD)\simeq Q\otimes_FD/I$ is a domain.
\end{enumerate}
\end{lem}

In what follows, we will work with generic elements of the skew field $Q$ instead of generic matrices.

\begin{lem}\label{denominator}
If $R$ is a denominator of $r$, then $R^n$ is a denominator of $r^n$.
\end{lem}

\begin{proof}
It is enough to treat the case where $R$ is a power of an irreducible polynomial. Consider the element $R\cdot r$. It
belongs to $Q\otimes F[y_1,\dots,y_m]$, it is not divisible by $\bar{R}$, the irreducible component of $R$ (because otherwise $R$ is not a minimal denominator of $r$).

Consider the integral domain $F[y_1,\dots,y_m]/\langle\bar{R}\rangle$ (see Lemma \ref{LeQotimesDomain}). Suppose that its localization is a skew field $G$. Now consider $ Q\otimes F[y_1,\dots,y_m]/\langle\bar{R}\rangle \subset Q\otimes G$. Because $Q\otimes G$ is a skew field, hence it does not contain any nilpotent element and therefore $(R\cdot r)^n$ is not divisible by $\bar{R}$ and the statement of the lemma follows.
\end{proof}

\begin{coro}    \label{CoIntClosQpolyn}
The algebra $Q\otimes_F F[z_1,\dots,z_m]$ of polynomials over $Q$ is integrally closed.
\end{coro}

\begin{proof}
  If $A$ belongs to the algebra of proper rational functions over $Q$, then it has a non-trivial denominator $R$. Then by the previous lemma, $A^n$ has the denominator $R^n$. Every power of $A$ also has a non-trivial denominator and can not be a polynomial over $Q$. 
  Moreover, consider $P(A)=A^n+S(A) (n>1)$ be a polynomial of $A$ with $\deg(S) = k< n$. Denominator of any component of $s(A)$ is $R^k$, with $k< n$. Hence, $P(A)$  has the same non-trivial denominator $R^n$ as $A^n$ by Lemma \ref{denominator}. Then $P(A)$ can not be a polynomial over $Q$.
\end{proof}

\begin{remark} In general, the same statement as in Corollary \ref{CoIntClosQpolyn} is not true for matrices. Consider the partial case when $P(A)=A^n$, because if $A^n$ (the $n^{th}$ power of a matrix $A$ with $\det(A)\ne 0$ over rational functions) is a matrix over the polynomial ring, then it does not imply that $A$ is a matrix with polynomial entries. That is why we represent the algebra of generic matrices generated by generic elements of a skew field.
\end{remark}

Let $P(A)$ be a non-constant polynomial of $A$, we proved that if $P(A)$ is a polynomial matrix over $Q$ and $A\in Q\otimes F(y_1,\dots,y_m)$ then $A\in Q\otimes F[y_1,\dots,y_m]$ and this was the reason to consider ``generic $Q$-elements".

\subsubsection{Trace algebras}

In this subsection, we will use traces. A trace algebra is a generalized monoid (a set with a ternary operation that satisfies certain generalized associativity and identity laws). Every trace algebra induces in a very natural way a mathematical object which exhibits the behavior of the interrelations familiar from the theory of linear spaces (with the notable exception of ``Fubini's theorem''). The induced object is induced in a well-behaved manner: its structure is determined by the structure of the trace algebra, and by nothing else. Conversely, if the trace algebra is well-behaved, then it is uniquely determined by its induced object. This means that when everything is well behaved, then our abstract ``linear spaces'' are the same thing as trace algebras. For more information regarding the trace algebras, we refer the interested reader to \cite{S72}.

\begin{remark}
Note that $\tr(e_i)\in F$ and $\tr(\sum x_ie_i)=\sum x_i\tr(e_i)$.
\end{remark}

And we have the following results.

\begin{lem}
Let $X=e_1x_1+e_2x_2+\cdots+e_{n^2}x_{n^2}$ be a generic $Q$-element, suppose $\{y_1,\dots,y_m\}$ is a set of variables disjoint with $x_1,\dots,x_{n^2}$. Let $S\in Q(y_1,\dots,y_m)$, $\tr(XS)\in Q[x_1,\dots,x_{n^2},y_1,\dots,y_m]$.  Then $S\in Q[y_1,\dots,y_m]$.
\end{lem}

\begin{proof}
  We have to prove that if $S$ has a non-trivial denominator, then $\tr(XS)$ also has a non-trivial denominator. Taking the tensor product with the algebraic closure $F^{cl}$ of $F$ we come to the matrix algebra $M_n(F^{cl})$ instead of $Q$. Because $\tr(X_1SX_2)=\tr(X_2X_1S)$ and we may consider an expression  $\tr(X_1SX_2)$ instead of $\tr(XS)$. It is enough to consider one proper specialization of $X_1,X_2$ such that $\tr(X_2SX_1)$ has a non-trivial denominator.

  Indeed, if $S$ has a non-trivial denominator, then one of its entries, $S_{ij}$, also has a nontrivial denominator. Then we can specialize $X_1$ to $E_{1i}$ and $X_2$ to $E_{j1}$. Then $\tr(X_2SX_1)=S_{ij}$ and will have the same denominator, which is not trivial.
\end{proof}

\begin{lem} \label{LeTrMtrTrans}
Let $X_s=\sum_{i=1}^{n^2}e_iy_i$ be a generic matrix. Let $B\in Q\otimes F[x_{i}^{(k)}]$ for $k=1,\dots,s-1$ and let $\{x_i\}$, $\{y_j\}$ be sets of variables with empty intersection. If $\tr(XB)$ is the trace of a generic matrix with trace, then $B$ is a generic matrix with trace.
\end{lem}

\begin{proof} In order to prove this result we use the standard method from $PI$-theory.

  Note that if variables participating in a generic matrix $X$ are different from $y_1,\dots,y_m$ then for $B\in M_n(k[y_1,\dots,y_m])$ the mapping $B\to \tr(XB)$ is injective and hence if $\tr(XB)=\tr(XB_1)$ for some generic matrix $B_1\in M_n(k[y_1,\dots,y_m])$ then $B=B_1$.

  Let $\tr(XB)$ be a trace polynomial, it has degree 1 with respect to $X$. It is an element of the algebra of generic matrices with trace and it is a sum of monomials of the form $$\tr(XB_1B_2\cdots B_l)\tr(M_1)\tr(M_2)\cdots\tr(M_l).$$
   Now if  we replace each such monomial by
  $B_1B_2\cdots B_l\tr(M_1)\tr(M_2)\cdots\tr(M_l)$ we will get $\sum B_1B_2\cdots B_l\tr(M_1)\tr(M_2)\cdots\tr(M_l)=B$. Therefore $B$ belongs to the algebra of generic matrices with trace and hence we are done.
\end{proof}

\begin{remark}
  We need to emphasize that the constructions related to trace monomials and the special variable $X$ are rather common in $\PI$-theory (see \cites{Kemer,Belovshpecht,Razmyslov} for example).
\end{remark}

\subsubsection{The proof}

Now we are ready to prove the main result of this section on the integral closedness of the algebra of generic matrices with traces (for the positive characteristic case -- with characteristic coefficients).

We first embed the algebra of generic matrices with traces into $Q[y_1,\dots,y_m]$ (for $y_1,\dots,y_m$ the entries coefficients) which is integrally closed (corollary \ref{CoIntClosQpolyn}) and hence our integral closure lies in $Q[y_1,\dots,y_m]$.

Let $Y$ be in the integral closure. Let $X$ be a new generic matrix variable, not participating in $Y^m$. Then
$\tr(XY)$ belongs to the localization of traces and therefore to the field of invariants. On the other hand, it belongs to
$F[x_1,\dots,x_{n^2},y_1,\dots,y_m]$.

Because of Procesi's theorem (resp. Donkin's theorem in positive characteristics), it belongs to the trace algebra
(resp. characteristic coefficients generic algebra for positive characteristics case).  Therefore $\tr(XY)$ is a characteristic
coefficients polynomial.

According to Lemma \ref{LeTrMtrTrans}, an element of the integral closure $Y$ belongs to the algebra of generic matrices with trace (forms or characteristic coefficients in positive characteristics), which is what we need.

\subsection{Proof of integrally closedness of centralizers}

In this subsection, we will prove that the centralizer $C$ is integrally closed. First, we will try to discuss the problem in the general case, and then we will proceed with the proof.

\medskip

As before, let $K$ be our ground field. Denote by $K\llangle X\rrangle $ the $K$-algebra of formal power series with
$X=\{x_1,\dots,x_n\}$. P. Cohn\cite{Cohn} proved that if $f\in K\llangle X\rrangle $ is not a constant,  then
$C(f;K\llangle X\rrangle)=K[\![\ell]\!]$, for some power series  $\ell$, where $K[\![\ell]\!]$ stands for the ring of
formal power series in $\ell$. This result is known as \emph{Cohn's centralizer theorem}.
\medskip

By Cohn's centralizer theorem, the centralizer of every non-constant element in $K\llangle X\rrangle$ is commutative, and since $K\langle X\rangle $ is a $K$-subalgebra of $K\llangle X\rrangle$, the centralizer of a non-constant element of $K\langle X\rangle $ will be commutative as well.

\medskip

We will give proof of the following Theorem by using the generic matrices technique based on our previous work \cite{KBRZh}.

\begin{thm}\label{integral}
The centralizer $C$ of  a non-trivial element $f$ in the free associative algebra is integrally closed.
\end{thm}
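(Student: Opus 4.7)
The plan is to transfer the integral-closure question from $C \subset F_s$ down to the corresponding statement for the generic-matrix centralizer $\tilde C := C(\pi(f); k\{X\})$, which has just been established in the proposition immediately preceding this theorem. First I would fix $n = p$, a prime large enough that $\pi|_C$ is an embedding (by Lemma 2.3 of \cite{KBRZh}) and that $A := \pi(f)$ is non-scalar. By Proposition \ref{propeigen} and its corollaries, $A$ is then diagonalizable with pairwise distinct eigenvalues, whence $\tilde C$ is commutative; combined with the injectivity of $\pi|_C$, this lets me extend $\pi$ to a field embedding $\tilde \pi : \Frac(C) \hookrightarrow \Frac(\tilde C)$.

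Next, given $\alpha \in \Frac(C)$ satisfying a monic integral relation $\alpha^m + c_{m-1}\alpha^{m-1} + \cdots + c_0 = 0$ with $c_i \in C$, I would apply $\tilde\pi$ to obtain the analogous relation for $\tilde\pi(\alpha)$ with coefficients $\pi(c_i) \in \pi(C) \subseteq \tilde C$. Thus $\tilde\pi(\alpha)$ is integral over $\tilde C$, and the integral closedness of $\tilde C$, together with its rational closedness (both provided by the preceding proposition, to cover elements of $\Frac(\tilde C)$), forces $\tilde\pi(\alpha) \in \tilde C$.

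The remaining task is the descent: from $\tilde\pi(\alpha) \in \tilde C$ deduce $\alpha \in C$. Here I would invoke Bergman's Proposition \ref{propberg}, the technical bridge between the free-algebra centralizer and the generic-matrix centralizer that is flagged in the introduction. In parallel one uses that $\tilde C$ is integral over $\pi(C)$: both are commutative domains of transcendence degree one over $k$, and after localizing at a suitable nonzero element of $\pi(C)$, $\tilde C$ becomes a finite module extension of $\pi(C)$, so the witness $\tilde\pi(\alpha)$ satisfies a monic polynomial over $\pi(C)$ whose preimage under $\tilde\pi|_C$ can be identified as an element of $C$.

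The main obstacle will be this last descent. The reduction to generic matrices only tells us that $\tilde\pi(\alpha)$ commutes with $A = \pi(f)$ inside $k\{X\}$, whereas what we actually need is that $\alpha$ lies literally in $C \subset F_s$ rather than merely in some larger localization of $F_s$ whose elements centralize $f$. Bergman's Proposition \ref{propberg} is precisely what closes this gap, and it is the one place in the argument where a nontrivial input from \cite{Berg} is genuinely needed; the invariant theory of Procesi, Zubkov and Donkin feeds into the proof only through the integral closedness of $\tilde C$ already established.
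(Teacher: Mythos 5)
There is a real gap in the descent step, and the tool you propose to close it is not the one the paper uses (nor, as far as I can see, a tool that would work).

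Your set-up is fine: pass to generic matrices of big prime order, use that $\pi|_C$ is an embedding, apply the integral relation to see that $\tilde\pi(\alpha)$ is integral over $\pi(C)\subseteq\tilde C$, and conclude from the preceding proposition that $\tilde\pi(\alpha)\in\tilde C$. The problem is what happens next. Knowing $\tilde\pi(\alpha)\in\tilde C$ only tells you that $\tilde\pi(\alpha)$ lies in the centralizer of $\pi(f)$ inside $k\{X\}$; it does not tell you that $\tilde\pi(\alpha)$ lies in $\pi(C)$, nor that $\alpha$ lies in $F_s$ at all. Your localization remark (``$\tilde C$ becomes a finite module extension of $\pi(C)$, so $\tilde\pi(\alpha)$ satisfies a monic polynomial over $\pi(C)$'') only re-expresses that $\tilde\pi(\alpha)$ is integral over $\pi(C)$ --- which was your starting point --- and does not produce an element of $\pi(C)$ mapping to it. And Proposition~\ref{propberg} is not the bridge you want: it gives a nontrivial homomorphism from a finitely generated subalgebra of $F_s$ into $k[v]$, which the paper uses later (in Section~4) to pass from ``integrally closed of transcendence degree one'' to ``isomorphic to $k[t]$''; it contributes nothing to lifting an element of $k\{X\}$ back into the free algebra.

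What the paper actually uses for the descent is a different device. The integral closure argument via invariant theory only puts $\tilde\pi(\alpha)$ into the algebra of generic matrices \emph{with trace} (the algebra of concomitants). The paper's Lemma~\ref{lil} (the ``local isomorphism lemma'') says that for any fixed degree bound $L$, if $s$ is a big enough prime then the algebra $\mathbb{U}_s$ of generic upper triangular matrices is locally $L$-isomorphic to the free associative algebra, and the generic-matrix reduction is an isomorphism up to that degree. Combined with the observation that traces vanish on $\mathbb{U}_s$, one projects via $\overline{\pi}$ (killing traces) to $\mathbb{U}_s$, transports through the local bijection $\rho$, and concludes that $\overline{g}$ already has a preimage in $F_s$, i.e.\ the integral element never actually uses traces. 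This trace-killing/local-isomorphism step is the genuinely new ingredient of the paper's proof of Theorem~\ref{integral}, and it is missing from your proposal. Without it (or some substitute), you cannot get from $\tilde\pi(\alpha)\in\tilde C$ to $\alpha\in F_s$, let alone $\alpha\in C$.
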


Let $g,P,Q\in C:=C(f;K\langle X\rangle)$, and suppose $g=P/Q$, then there exists $h\in C$, such that $R(h)=g$. This means that the centralizer $C$ is integrally closed. Next, we will establish a relationship between the centralizer and the algebra of generic matrices $K\{X\}$ by the local isomorphism.

Consider the homomorphism $\pi$ from the free associative algebra $K\langle X\rangle$ to $K_T\{X\}$, the algebra of generic matrices with traces. Let us denote by $\bar{g}$ the image $\pi(g)$. Then we have the following proposition.

\begin{prop}
Consider the epimorphism $\pi: K\langle X\rangle\to K_T\{X\}$. Let the order of matrices be a prime number $p\gg 0$. Consider $\overline{g}=\pi(g), \overline{P}=\pi(P)$ and $\overline{Q}=\pi(Q)$. Then there exists $\overline{h}\in K_T\{X\}$ such that we have the following assertions:
\begin{enumerate}
\item[1)] $R(\bar{h})=\overline{g}$, \label{Equ:1)}
\item[2)] $\bar{h}=\frac{\overline{P}}{\overline{Q}}$, \label{Equ:2)}
\item[3)] $\bar{h}\in \overline{C}$, where $\overline{C}=\pi(C)$. \label{Equ:3)}
\end{enumerate}
\end{prop}

\begin{proof}
Statements 1) and 2) will directly follow from Proposition \ref{ICwtr} which indicates that the algebra of generic matrices with traces is integrally closed. Hence, we just need to prove the third statement. Note that all eigenvalues of $\bar{g}$ are pairwise distinct due to Proposition \ref{propeigen}. The same also holds for $\bar{f}$. Therefore $\bar{f}$ and,$\bar{g}$ are diagonalizable and $\bar{h}$ can be diagonalized in the same eigenvectors basis and hence by Proposition \ref{ProPcomm} it follows that $\bar{h}$ commutes with $\bar{f}$, meaning that $\bar{h}\in \overline{C}$.
\end{proof}

Now we need to prove that $\bar{h}$ in fact belongs to the algebra of generic matrices without a trace. We use the local isomorphism to dispose of traces. One may refer to \cite{BP87} for more details concerning the local isomorphism.

\medskip

\begin{defn}[Local isomorphism]
Let $\mathbb{A}$ be an algebra with generators $a_1,\dots,a_s$ and homogeneous with respect to this set of generators, and let $\mathbb{A}'$ be an algebra with generators $a_1',\dots,a_s'$ homogeneous with respect to this set of generators. We say that $\mathbb{A}$ and $\mathbb{A}'$ are {\em locally $L$-isomorphic} if there exists a linear map $\varphi:a_i\to a_i'$ on the space of monomials of degree $\leq 2L$, and in this case for any two
elements $b_1,b_2\in A$ with highest term of degree $\leq L$, we have $$b_i=\sum_{j}M_{ij}(a_1,\dots,a_s)~ \text{and}~~
b_i'=\sum_{j}M_{ij}(a_1',\dots,a_s'),$$ where $M_{ij}$ are monomials, and for $b=b_1\cdot b_2,~ b'=b_1'\cdot b_2'$, we have $\varphi(b)=b'$.
\end{defn}

For to continue we need the following lemmas and propositions:

\begin{lem}[Local isomorphism lemma]\label{lil}
For any $L$, if $s$ is a large enough prime, then the algebra of generic upper triangular matrices $\mathbb{U}_s$ is
locally $L$-isomorphic to the free associative algebra. Also the reduction of the algebra of generic matrices with traces of degree $n$ provides an isomorphism up to degree $\leq 2s$.
\end{lem}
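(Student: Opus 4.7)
The plan is to take the required local isomorphism to be the natural specialization homomorphism
$$\pi\colon F\to\mathbb{U}_s,\qquad z_i\mapsto X_i,$$
from the free associative algebra $F$ to the algebra $\mathbb{U}_s$ of $s\times s$ generic upper triangular matrices, sending each free generator $z_i$ to the corresponding generic upper triangular matrix $X_i$. The crucial ingredient is the classical PI-theoretic theorem of Maltsev: the T-ideal of polynomial identities of $s\times s$ upper triangular matrices is generated by the product of $s$ commutators
$$[x_1,y_1][x_2,y_2]\cdots[x_s,y_s],$$
whose total degree is $2s$, and no identity of lower degree holds. Since the T-ideal of $\mathbb{U}_s$ coincides with that of $\mathrm{UT}_s(k)$, the kernel of $\pi$ contains no nonzero polynomial of degree $<2s$.

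Given $L$, I would choose a prime $s>L$ (which exists by infinitude of primes). Then $2L<2s$, so the restriction of $\pi$ to the linear span of monomials of degree $\leq 2L$ is injective. Because $\pi$ sends each monomial in the $z_i$ to the corresponding monomial in the $X_i$ preserving degree, this restriction is a linear bijection onto the linear span of the corresponding monomials in $\mathbb{U}_s$, and I would take this bijection as the map $\varphi$ in the definition of local $L$-isomorphism. Multiplicativity $\varphi(b_1b_2)=\varphi(b_1)\varphi(b_2)$ for pairs $b_1,b_2$ of degree $\leq L$ is then automatic: the product $b_1b_2$ still lies in degrees $\leq 2L$ (hence within the domain of $\varphi$), and $\pi$ is an algebra homomorphism, so
$$\varphi(b_1b_2)=\pi(b_1)\pi(b_2)=\varphi(b_1)\varphi(b_2).$$

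For the second assertion concerning reduction to the algebra of generic matrices of order $s$, I would invoke the Amitsur--Levitzki theorem: the minimal polynomial identity of $M_s(k)$ is the standard polynomial of degree $2s$, so the natural map from $F$ to $k\{X\}$ is injective on polynomials of degree strictly less than $2s$. This produces the claimed isomorphism of the degree-filtered pieces up to the stated bound.

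The main technical obstacle is the identity-degree lower bound for upper triangular matrices, namely that $\mathrm{UT}_s(k)$ admits no nonzero identity of degree $<2s$. This is a classical but nontrivial result (Maltsev), and it is the real engine of the lemma; the remainder is a bookkeeping verification that the natural degree-filtered specialization is injective and respects the multiplications required by the definition of local $L$-isomorphism. Primality of $s$ is not strictly required for the local isomorphism itself; it is imposed so that the chosen $s$ remains compatible with earlier results such as Proposition \ref{propCH} which rely on the matrix order being prime.
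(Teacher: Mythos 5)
Your approach is the natural one and, in outline, it works; the paper itself states Lemma~\ref{lil} without supplying a proof, so there is no authorial argument to compare against. A few points deserve care, because the way you have packaged the key ingredient is slightly off.

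First, you attribute the statement ``no identity of degree $<2s$ holds for $\mathrm{UT}_s(k)$'' to Maltsev's theorem. Maltsev's theorem asserts that the $T$-ideal of $\mathrm{UT}_s(k)$ (in characteristic zero) is \emph{generated as a $T$-ideal} by $[x_1,y_1]\cdots[x_s,y_s]$; this by itself does not control the minimal degree of a nonzero element of the $T$-ideal, since $T$-ideals are closed under substitution and sums, and a priori could contain lower-degree multihomogeneous elements. The minimal-degree bound is a separate (and easier) fact, established by the standard ``staircase'' argument: the $2s-1$ distinct matrix units $e_{11},e_{12},e_{22},e_{23},\dots,e_{s-1,s},e_{ss}$ all lie in $\mathrm{UT}_s$, their product in that order is $e_{1s}\neq 0$, and by an Eulerian-path count this is the \emph{only} ordering of them with a nonzero product. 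Plugging these (and their permutations) into a purported multilinear identity of degree $2s-1$ forces all its coefficients to vanish, and over an infinite field every PI of degree $d$ yields a nonzero multilinear PI of the same degree, so the minimal PI degree of $\mathrm{UT}_s$ is exactly $2s$. It is worth stating it this way for a second reason: Maltsev's theorem is a characteristic-zero result, whereas the staircase bound holds over any infinite field, which matters because this paper explicitly aims at a characteristic-free proof (hence its reliance on Donkin and Zubkov rather than Procesi for invariant theory). Your proof of the lemma should not quietly reintroduce a characteristic-zero assumption through the side door.

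Second, you should make explicit that $\ker(\pi)$ equals the $T$-ideal of identities of $\mathrm{UT}_s(k)$; this is the upper-triangular analogue of the corresponding fact for generic matrices and again uses that $k$ is infinite. Once that is said, injectivity of $\pi$ on polynomials of degree $\leq 2L$ (for $s>L$) and automatic multiplicativity of its restriction for factors of degree $\leq L$ follow exactly as you wrote, and the map is onto the degree-$\leq 2L$ span of monomials in the $X_i$ by construction, giving the required local $L$-isomorphism.

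Third, for the second assertion there is an off-by-one that you inherit from the lemma statement itself. Amitsur--Levitzki gives $S_{2s}$ as an identity of $M_s$, so the map $F\to k\{X\}$ to generic $s\times s$ matrices is injective on degree $\leq 2s-1$, not $\leq 2s$. Your Amitsur--Levitzki argument is correct; the bound in the displayed statement of the lemma should read ``degree $<2s$'' (or ``$\leq 2s-1$''). You are right that primality of $s$ plays no role here and is imposed only for compatibility with Proposition~\ref{propCH}.
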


It is important to note a well-known and extremely helpful fact here, which we will put as a proposition.

\begin{prop}
The trace of every element in $\mathbb{U}_s$ of any characteristic is zero.
\end{prop}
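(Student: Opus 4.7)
The plan is straightforward: unpack the definition of $\mathbb{U}_s$ and observe that every element sits inside the strictly upper triangular matrices, whose trace is identically zero regardless of the characteristic.

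First, I would recall the construction. The generators of $\mathbb{U}_s$ are taken to be $s\times s$ generic strictly upper triangular matrices $X_{\nu}=(x_{ij}^{(\nu)})$ where $x_{ij}^{(\nu)}=0$ whenever $i\geq j$ and $x_{ij}^{(\nu)}$ is an independent commuting indeterminate when $i<j$. This is the convention that makes Lemma \ref{lil} correct, since having nilpotent generators of index $\leq s$ is precisely what rules out nontrivial relations of low degree and yields the local isomorphism with the free associative algebra. In particular, each generator has every diagonal entry equal to zero.

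Next, I would verify that the strictly upper triangular matrices form a subalgebra (without unit) of $M_s(R)$ for any commutative ring $R$. If $A=(a_{ij})$ and $B=(b_{ij})$ are strictly upper triangular, then for every pair $i\geq j$ one has
\begin{equation*}
(AB)_{ij}=\sum_{\ell=1}^{s} a_{i\ell}b_{\ell j}=0,
\end{equation*}
because in each summand either $\ell\leq i$, forcing $a_{i\ell}=0$, or $\ell\geq j$, forcing $b_{\ell j}=0$. Hence the product is again strictly upper triangular, and the same is obvious for $k$-linear combinations. Consequently, the subalgebra $\mathbb{U}_s$ generated by the $X_\nu$ lies entirely inside the strictly upper triangular matrices over $k[x_{ij}^{(\nu)}]$.

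Finally, I would conclude: for any $A\in\mathbb{U}_s$ the diagonal entries $A_{ii}$ all vanish, so
\begin{equation*}
\tr(A)=\sum_{i=1}^{s}A_{ii}=0.
\end{equation*}
This final summation uses only the additive structure of the ground ring, so the conclusion is valid in any characteristic. There is no real obstacle here; the statement reduces to checking that strict upper triangularity is preserved under the algebra operations, which is the observation above. The only point to be careful about is to confirm that the intended definition of $\mathbb{U}_s$ uses strictly upper triangular generators (as required by Lemma \ref{lil}), rather than ordinary upper triangular ones, where the diagonal entries would themselves be generic and the trace would be nonzero.
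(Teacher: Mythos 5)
Your proof is correct, and in fact the paper offers no proof at all of this proposition --- it is labelled a ``well-known and useful fact'' and left unjustified, and $\mathbb{U}_s$ itself is never explicitly defined. Your argument is the natural one: once $\mathbb{U}_s$ is read as the subalgebra generated by generic \emph{strictly} upper triangular matrices, the claim reduces to the elementary fact that strictly upper triangular matrices over any commutative ring form a (non-unital) subalgebra of $M_s$ on which every diagonal entry, hence the trace, vanishes identically; your index check that $(AB)_{ij}=0$ for $i\geq j$ is correct, and you rightly note the argument uses only the additive structure, so it is characteristic-free. You also correctly identify that the strict-upper-triangular reading is forced, since with ordinary upper triangular generators the diagonal entries would be independent indeterminates and the statement would be false.

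One small point worth making explicit: your argument tacitly treats $\mathbb{U}_s$ as a non-unital algebra. If the convention were to adjoin the identity matrix (as is done for $k\{X\}$ and for the free associative algebra), then the scalars $c\cdot I$ would lie in $\mathbb{U}_s$ with $\tr(c\cdot I)=cs$, which is nonzero unless $\mathrm{char}(k)\mid s$, contradicting the ``any characteristic'' clause. So the non-unital reading is not merely a convenience but is forced by the statement, just as the strictly-upper-triangular reading is; it would strengthen the write-up to say so. Aside from this, there is no gap.
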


We also substantiated the following results in the aforementioned procedure:

\begin{prop}
If $n>n(L)$, then the algebra of generic matrices with and without traces are locally isomorphic. Hence, the algebra of generic matrices without traces is $L$-locally integrally closed.
\end{prop}

\begin{lem}
Consider the projection $\overline{\pi}$ from the algebra of generic matrices with traces to $\mathbb{U}_s$, sending all traces to zero. Then we have 
$$\overline{\pi}(R(\overline{h}))=\overline{\pi}(g).$$
\end{lem}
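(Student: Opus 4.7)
The plan is straightforward: the asserted identity is the image of the defining equation $h=g^k$ under two successive algebra homomorphisms, so the proof reduces to verifying that both maps in the composition genuinely preserve products and powers.

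First I would rewrite $h=g^k$ — the relation satisfied by $g$ as an element of the integral closure $k[g]$ of the centralizer $C$ obtained from L\"uroth's theorem — and push it through the homomorphism $\pi$, extended to the integral closure of $C$ inside the algebra of generic matrices with trace (where $\overline{g}=\pi(g)$ is known to live). Since $\pi$ is a ring homomorphism, this gives
\[
\overline{h} \;=\; \pi(h) \;=\; \pi(g)^{k} \;=\; \overline{g}^{\,k},
\]
an equality taking place inside the algebra of generic matrices with trace.

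Next I would apply the projection $\overline{\pi}$ to both sides. The key point to check is that $\overline{\pi}$ is a well-defined algebra homomorphism from the algebra of generic matrices with trace to $\mathbb{U}_s$. This follows from the universal property of the algebra with trace: the assignment sending each generic matrix to its strictly upper triangular counterpart in $\mathbb{U}_s$ and every trace expression to $0$ is compatible with the trace axioms $\tr(ab)=\tr(ba)$, $a\,\tr(b)=\tr(b)\,a$ and $\tr(\tr(a)b)=\tr(a)\tr(b)$, precisely because the preceding proposition guarantees that the trace on $\mathbb{U}_s$ vanishes identically. Granted this, applying $\overline{\pi}$ to $\overline{h}=\overline{g}^{\,k}$ yields
\[
\overline{\pi}(\overline{h}) \;=\; \overline{\pi}(\overline{g}^{\,k}) \;=\; \overline{\pi}(\overline{g})^{k},
\]
which, under the standing shorthand $\overline{\pi}(h):=\overline{\pi}(\overline{h})=\overline{\pi}(\pi(h))$, is exactly the claim.

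The main (and essentially only) obstacle is the well-definedness of $\overline{\pi}$ as a morphism of algebras with trace. Everything else is the tautology that ring homomorphisms preserve $k$-th powers. The verification of well-definedness is routine: since all traces are zero in $\mathbb{U}_s$, no trace relation coming from the enriched structure can obstruct the projection, and the map factors through the two-sided ideal generated by all traces in the algebra of generic matrices with trace.
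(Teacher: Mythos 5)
The paper states this lemma without supplying a proof (the next proof environment is for Theorem 3.7), so there is nothing to compare against line by line; your argument is a reasonable filling-in of what the authors evidently intend, and its overall structure --- push $h=g^k$ through $\pi$ to get $\overline{h}=\overline{g}^{\,k}$ inside the algebra of generic matrices with trace, then apply $\overline{\pi}$ and use that homomorphisms preserve $k$-th powers --- is the right one.

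One point in your verification of well-definedness is, however, off target. You argue that $\overline{\pi}$ is well defined because the assignment ``generic matrix $\mapsto$ strictly upper triangular generic matrix, $\tr \mapsto 0$'' is compatible with the three trace axioms. But the algebra of generic matrices with trace is not the free algebra with trace; it is the quotient by the T-ideal of \emph{trace identities of $n\times n$ matrices} (Cayley--Hamilton and its consequences), so compatibility with the bare trace axioms is not sufficient. The correct justification is that $\mathbb{U}_s$ is by construction a subalgebra of $M_n(R)$ for a commutative ring $R$, so every trace identity of $n\times n$ matrices restricts to it, and on strictly upper triangular matrices the ambient trace vanishes identically; hence evaluation at generic strictly upper triangular matrices is a morphism of algebras with trace that kills $\tr$, which is exactly $\overline{\pi}$. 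You should also note that the step $\overline{g}=\pi(g)$, where $\pi$ is tacitly extended beyond $F_s$ to the integral closure $k[g]$ of $C$, is itself glossed over; the paper actually constructs $\overline{g}$ directly as the matrix whose eigenvalues are $k$-th roots of those of $\overline{h}$ in the common eigenbasis (so that $\overline{g}^{\,k}=\overline{h}$ holds by construction), and then asserts that this $\overline{g}$ lies in the algebra of generic matrices with trace. Either route gives the relation $\overline{g}^{\,k}=\overline{h}$ needed before applying $\overline{\pi}$, but the extension of $\pi$ you invoke is not free of charge and deserves at least a remark.
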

 
Now we are ready to prove Theorem \ref{integral}, which is our main result in this section:
\begin{proof}[Proof of Theorem \ref{integral}]
Let $p$ be a big enough prime number and because space $K_T\{X\}$ of degree $\leq p$, is isomorphic to the space of free associative algebras, hence can set 
$$p\geq  2(\deg(f)+\deg(g)+\deg(P)+\deg(Q)).$$ 
 Note that in this case up to isomorphism, we have that $h$ corresponds to $\bar{h}$ and due to local isomorphism, $R(h)=g$ and $h=P/Q$, meaning that $hQ=P$.  Also, we have $h$ commutes with $f$, which means that $h\in C$.
\end{proof}

\section{Proof of the Bergman centralizer theorem} \label{final-proof}

We can infer the following proposition from the previous two sections:

\begin{prop}\label{propratint}
Let $p$ be a large enough prime number, and $K\{X\}$  the algebra of generic matrices of order $p$. For any $A\in K\{X\}$, the centralizer of $A$ is integrally closed in $K\{X\}$ over the center of $K\{X\}$.
\end{prop}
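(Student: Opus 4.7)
The plan is to consolidate the two threads developed in the preceding sections: the spectral picture available in prime order, and the invariant-theoretic description of $k\{X\}$ with trace as an algebra of concomitants. Both halves of the statement should fall out without new machinery, but each requires a careful tracking of the hypotheses on $p$.

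For rational closure, I would start from Proposition \ref{propeigen}: for $n=p$ a sufficiently large prime and $A$ non-scalar, the eigenvalues of $A$ in an algebraic closure of $\Frac(Z(k\{X\}))$ are pairwise distinct, so $A$ is diagonalizable with simple spectrum. The earlier proposition on matrices sharing eigenvectors then forces every element of $k\{X\}$ commuting with $A$ to be diagonal in the eigenbasis of $A$, hence to be a polynomial in $A$ whose coefficients are symmetric functions of the eigenvalues and therefore lie in $Z(k\{X\})$. So $C(A;k\{X\})$ is exactly the $Z(k\{X\})$-subalgebra generated by $A$. If now $B \in k\{X\}$ lies in $\Frac(C(A))$, the same eigenbasis expansion applied to $B$ expresses it as a polynomial in $A$ over $Z(k\{X\})$, and thus $B \in C(A)$, which is precisely rational closure.

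For integral closure, I would invoke the invariant-theoretic theorem already cited in this section: by Procesi \cite{Procesi} in characteristic zero and by Zubkov \cite{Zubkov, Zubkov2} and Donkin \cite{Donkin, Donkin2} in positive characteristic, the algebra of generic matrices with trace is the algebra of concomitants for the conjugation action, and is therefore integrally closed in its field of fractions. Because $C(A;k\{X\})$ cuts out the concomitant algebra by the linear commutation condition $[-,A]=0$, it inherits integral closure over $Z(k\{X\})$, and this is exactly the content of the unnumbered proposition closing the invariant-theory subsection.

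The step I expect to be the main obstacle, and the only place genuinely sensitive to the choice of $p$, is the descent from the traced concomitant algebra back to the untraced algebra $k\{X\}$: one needs to ensure that the elements produced by the invariant-theoretic argument really live in $k\{X\}$ and not merely in the strictly larger algebra equipped with a formal trace operation. This is exactly where the local isomorphism Lemma \ref{lil} and the vanishing of traces on generic upper-triangular matrices come in, following the pattern of the final lines of the proof of Theorem \ref{integral}; the primality of $p$ is what guarantees that the minimal and characteristic polynomials of $A$ coincide, so that the spectral argument in the first paragraph does not collapse under Galois action on the eigenvalues.
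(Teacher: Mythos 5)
The paper gives no explicit proof of Proposition~\ref{propratint}; it is simply asserted to follow ``from last two sections,'' that is, from the spectral facts at prime order in Section~\ref{GenMatrix} and the concomitant-algebra description in Section 3.1, together with the trace-to-no-trace descent via Lemma~\ref{lil}. Your reconstruction targets exactly those ingredients, so the overall route matches the paper's (implicit) intent. Two of your inferences, however, are too fast and one of them is genuinely wrong as stated.

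For rational closure, the claim that the coefficients of the interpolating polynomial expressing a commuting $B$ as $P(A)$ ``are symmetric functions of the eigenvalues and therefore lie in $Z(k\{X\})$'' is imprecise. Those coefficients depend jointly on the eigenvalues of $A$ and the diagonal entries of $B$ in $A$'s eigenbasis; what pins them down is invariance under the conjugation action permuting the simultaneous eigenpairs (i.e.\ Galois descent for the reduced characteristic polynomial), not the symmetric function theorem applied to the $\lambda_i$ alone. You also need to be careful that the relevant center is that of the division ring $\Frac(k\{X\})$, not the (much smaller) center of $k\{X\}$ itself. The clean statement is: when $n=p$ is a large prime and $A$ is non-scalar, $C(A;\Frac(k\{X\}))=Z(\Frac(k\{X\}))(A)$ is a maximal subfield, hence $C(A;k\{X\})=k\{X\}\cap Z(\Frac(k\{X\}))(A)$, which is tautologically rationally closed in $k\{X\}$.

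For integral closure, the step ``because $C(A;k\{X\})$ cuts out the concomitant algebra by the linear commutation condition $[-,A]=0$, it inherits integral closure'' is not valid: integral closure does not pass to linear sections in general (a hyperplane slice of a normal variety need not be normal). What does work is again the intersection description. The concomitant algebra (generic matrices with trace) is integrally closed --- as a maximal order over its normal center of invariants --- and $C(A)$ is its intersection with the commutative subfield $Z(A)$ inside the common fraction field; any element of $Z(A)$ integral over $C(A)$ is integral over the trace ring, hence lies in it, hence lies in the intersection. After that, the trace-to-no-trace descent via Lemma~\ref{lil} and the vanishing of traces on $\mathbb{U}_s$ is exactly what is needed, and you are right that this, together with the coincidence of minimal and characteristic polynomials at prime order, is where the choice of $p$ enters irreducibly.
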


In our previous paper \cite{KBRZh}, we established that the centralizer in the algebra of generic matrices is a commutative ring of transcendence degree one. According to Proposition \ref{propratint}, $C(A)$ is integrally closed in $K\{X\}$ and if $p$ is big enough, then $K\{X\}$ is $L$-locally integrally closed.

\medskip

Let $X$ be a totally ordered set and $W$ be the free monoid on the set $X$ with the empty word representing $1$. Let $\overline{W}$ be the set of all \textit{right infinite} words in $X$ (i.e. infinite sequences of elements of $X$). Given $u\in  W-\{1\}$, let $u^{\infty}$ denote by the word obtained by repeating $u$ infinitely: $uuu\dots$. Let $\overline{W}$ be ordered lexicographically.

Next we need the following extra fact about ``infinite words" from Bergman \cite{Berg}. 

\begin{lem}[Bergman]\label{lemword}
Let $u,v\in W\setminus\{1\}.$ If $u^{\infty}>v^{\infty}$, then we have $u^{\infty}>(uv)^{\infty}>(vu)^{\infty}>v^{\infty}$.
\end{lem}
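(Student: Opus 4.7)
The plan is to reduce all three claimed strict inequalities to the single comparison of finite words $uv > vu$ (lex, on words of common length $|u|+|v|$) via the following key equivalence, which I would establish first as a sublemma: for any nonempty $w, w' \in W$,
\[
w^\infty > w'^\infty \iff ww' > w'w.
\]

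Granting this equivalence, the three conclusions fall out almost mechanically. The middle inequality $(uv)^\infty > (vu)^\infty$ is in fact immediate without the equivalence, since both infinite words are periodic with common period $|u|+|v|$, so their lex comparison coincides with that of $uv$ and $vu$. For $u^\infty > (uv)^\infty$, I apply the equivalence to the pair $(u, uv)$ to get $u^\infty > (uv)^\infty \iff u \cdot uv > uv \cdot u$, and canceling the common prefix $u$ reduces the right-hand side to $uv > vu$. Symmetrically, applying it to $(vu, v)$ yields $(vu)^\infty > v^\infty \iff vuv > vvu \iff uv > vu$. Finally, applied to $(u, v)$, the equivalence translates the hypothesis $u^\infty > v^\infty$ directly into $uv > vu$, so all three inequalities are discharged simultaneously.

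The substance is therefore in the sublemma. By the Fine--Wilf theorem, $uv = vu$ iff $u, v$ are powers of a common word iff $u^\infty = v^\infty$, so I may assume $u^\infty \neq v^\infty$ and let $i$ be the first position at which they disagree; Fine--Wilf also provides the bound $i \leq |u|+|v|-\gcd(|u|,|v|)$. I would then verify that $(uv)_j = (vu)_j$ for all $j < i$ and that $(uv)_i$ vs $(vu)_i$ has the same sign as $(u^\infty)_i$ vs $(v^\infty)_i$. The case $i \leq \min(|u|, |v|)$ is trivial because $(uv)_j = u_j = (u^\infty)_j$ and $(vu)_j = v_j = (v^\infty)_j$ for $j \leq i$, so the two comparisons are literally identical. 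In the remaining case $\min(|u|, |v|) < i$, without loss of generality $|u| \leq |v|$; the agreement of $u^\infty$ and $v^\infty$ on positions $1, \ldots, |u|$ forces $u$ to be a prefix of $v$, say $v = uv''$, and I would conclude by a letter-by-letter unwinding of $uv = u^2 v''$ and $vu = u v'' u$ in terms of this decomposition, possibly iterating by induction on $|u|+|v|$ via the strictly shorter pair $(u, v'')$.

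The main obstacle is precisely this second case of the sublemma: tracking which letter of $u$ or of $v''$ occupies each position in the four words $u^\infty, v^\infty, uv, vu$ is delicate, and a clean induction requires showing that the hypothesis $u^\infty > v^\infty$ with $v = uv''$ transports to a comparison of the correct sign between $u^\infty$ and a shift of $v''^\infty$. Once this bookkeeping is carried out, the sublemma, and hence the full lemma, follow as described.
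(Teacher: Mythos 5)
Your reduction of all three inequalities to the single equivalence $w^\infty > w'^\infty \iff ww' > w'w$ is correct and clean; the prefix/suffix cancellations you use to pass between the three instances are all legitimate, and the middle inequality is indeed immediate. However, that sublemma carries the entire weight of the lemma, and your sketch of its proof has a genuine gap precisely where you flag one. In the case $v = uv''$, canceling the prefix $u$ turns the finite-word comparison into $uv'' \lessgtr v''u$, which by the inductive hypothesis for the shorter pair $(u,v'')$ is equivalent to $u^\infty \lessgtr v''^\infty$. But the infinite-word hypothesis does \emph{not} transport there by a shift: shifting $u^\infty \lessgtr v^\infty = (uv'')^\infty$ by $|u|$ gives $u^\infty \lessgtr (v''u)^\infty$, and $(v''u)^\infty$ is a shift of $v^\infty$, not of $v''^\infty$. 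So to close the induction you would need the equivalence $u^\infty > (uv'')^\infty \iff u^\infty > v''^\infty$, and this is not bookkeeping: it is exactly the outer inequality $u^\infty > (uv'')^\infty > v''^\infty$ of the lemma you are trying to prove (applied to the pair $u, v''$), so the argument as sketched is circular. The Fine--Wilf bound on the first disagreement is also never actually used.

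The paper's proof avoids this entirely with a different and shorter device. It observes that the whole chain follows (by trichotomy, together with the fact that $(uv)^\infty = (vu)^\infty$ forces $uv = vu$ and hence $u^\infty = v^\infty$) once one shows $(uv)^\infty > (vu)^\infty \Rightarrow (vu)^\infty > v^\infty$ and symmetrically $u^\infty > (uv)^\infty$. This implication is obtained from the identity $(vu)^\infty = v(uv)^\infty$ and the strict monotonicity of left-multiplication by $v$: iterating gives $(vu)^\infty > v^n(vu)^\infty$ for every $n$, and letting $n\to\infty$ (pointwise) yields $(vu)^\infty \geq v^\infty$, with equality ruled out because it would force $(uv)^\infty = (vu)^\infty$. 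If you want to salvage your route, the cleanest fix is to abandon the induction on $|u|+|v|$ and instead prove the sublemma directly by this same prepend-and-iterate argument; as written, though, the inductive step does not go through.
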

\begin{proof}
It suffices to show that the whole inequality is implied by $(uv)^{\infty}>(vu)^{\infty}$. Suppose $(uv)^{\infty}>(vu)^{\infty}$, then we have the following
$$(vu)^{\infty}=v(uv)^{\infty}>v(vu)^{\infty}=v^2(uv)^{\infty}>v^2(vu)^{\infty}=\cdots =v^{\infty}.$$
Similarly, we can obtain $(uv)^{\infty}<u^{\infty}.$
\end{proof}

It is easy to see that Lemma \ref{lemword} still works when we replace ``$>$'' with ``$=$'' or ``$<$''.

\medskip

\begin{remark}
Similar constructions are used in \cite{BBL} for Burnside type problems and the Shirshov height theorem.
\end{remark}

\medskip

Now let $R$ be the semigroup algebra on $W$ over field $K$, meaning that $R=K\langle X\rangle$ is the free associative algebra. Consider $z\in \overline{W}$ be an infinite period word, and we denote by $R_{(z)}$ the $K$-subspace of
$R$ generated by words $u$ such that $u=1$ or $u^{\infty}\leq z$. Let $I_{(z)}$ be the $K$-subspace spanned by words $u$ such that $u\neq 1$ and $u^{\infty}<z$. Using Lemma \ref{lemword}, we can conclude that $R_{(z)}$ is a subring of $R$ and $I_{{z}}$ is a two-sided ideal in $R_{(z)}$ and it follows that $R_{(z)}/I_{(z)}$ is isomorphic  to the polynomial ring $K[u]$.

\begin{prop}[Bergman]\label{propberg}
For $C\neq K$ (a finitely generated subalgebra of $K\langle X\rangle$) there is a homomorphism $f$ of $C$ into the polynomial algebra over $K$ in one variable, such that $f(C)\neq K$.
\end{prop}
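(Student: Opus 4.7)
The plan is to realize $f$ as the composition $C\hookrightarrow R_{(z)}\twoheadrightarrow R_{(z)}/I_{(z)}\cong k[v]$ for a carefully chosen infinite periodic word $z$. Concretely, I want $z$ large enough that every monomial appearing in every element of $C$ satisfies $u^{\infty}\leq z$ (so $C\subseteq R_{(z)}$), yet not strictly larger than necessary, so that at least one generator of $C$ has a monomial $u$ with $u^{\infty}=z$ and therefore does not lie in $I_{(z)}$. This will force $f(C)$ to contain a positive-degree element of $k[v]$, hence $f(C)\neq k$.

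To build $z$, I would fix generators $c_1,\dots,c_m$ of $C$, discarding any purely scalar ones (possible because $C\neq k$). For each $j$, expand $c_j=\sum_u a_u^{(j)}u$ over the word basis and set
$$z_j:=\max\bigl\{u^{\infty}\;:\;u\in W\setminus\{1\},\ a_u^{(j)}\neq 0\bigr\},$$
which exists since each $c_j$ is supported on finitely many words and the comparison is the lexicographic order on infinite periodic words used in Lemma \ref{lemword}. Then $z:=\max_{1\leq j\leq m} z_j$ is the desired period, with primitive root $v$.

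The technical core, which I expect to be the main obstacle, is showing $C\subseteq R_{(z)}$. Since $C$ is spanned over $k$ by products $c_{i_1}c_{i_2}\cdots c_{i_r}$, and in $F_s$ such products are honest concatenations of words with scalar coefficients (no collapses), the claim reduces to the monoid-level inequality
$$(pq)^{\infty}\leq \max\bigl(p^{\infty},q^{\infty}\bigr)\quad\text{for all } p,q\in W\setminus\{1\}.$$
This is exactly the "$\leq$" variant of Lemma \ref{lemword} pointed out immediately after its proof; a routine induction on the product length then yields $(u_1\cdots u_r)^{\infty}\leq \max_j u_j^{\infty}\leq z$, and linearity of $R_{(z)}$ as a $k$-subspace finishes the inclusion. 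The reason this is the delicate step is that the infinite-periodic-word ordering is not the naive finite-word deg-lex ordering and does not a priori respect concatenation, so the whole argument rests on Lemma \ref{lemword} to bridge the two.

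To conclude, by construction of $z=\max_j z_j$ some generator $c_{j_0}$ contains a monomial $u$ with $u^{\infty}=z$; any such $u$ is a positive power $v^{i}$ of the primitive root (since $u^{\infty}=v^{\infty}$ forces $u=v^i$ with $i\geq 1$), so the image of $c_{j_0}$ in $R_{(z)}/I_{(z)}\cong k[v]$ has a nonzero coefficient at $v^i$ and is non-scalar. Hence the composite homomorphism $f\colon C\to k[v]$ satisfies $f(C)\not\subseteq k$, proving the proposition.
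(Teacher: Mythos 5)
Your proof is correct and follows essentially the same route as the paper: choose $z$ as the maximum of $u^{\infty}$ over all monomials $u\neq 1$ appearing in a finite generating set, then compose $C\hookrightarrow R_{(z)}\twoheadrightarrow R_{(z)}/I_{(z)}\cong k[v]$. The only difference is that you re-derive inside the proof the fact that $R_{(z)}$ is a subring (via $(pq)^{\infty}\leq\max(p^{\infty},q^{\infty})$ from the ``$\leq$'' variant of Lemma \ref{lemword}), whereas the paper records this as a prior observation and simply invokes it.
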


\begin{proof}[Proof (Bergman)]
For a given totally order on $X$, let $G$ be a finite set of generators for $C$ and let $z$ be the maximum over all monomials $u\neq 1$ with nonzero coefficient in elements of $G$ of $u^{\infty}$. Then we have $G\subseteq R_{(z)}$ and hence $C\subseteq R_{(z)}$, and the quotient map $f: R_{(z)}\to R_{(z)}/I_{(z)}\cong K[v]$ is nontrivial on $C$.
\end{proof}

\begin{remark}
    Because $f(C)\neq K$ and $C$ does not have zero-divisors and is a commutative ring of transcendental degree one \cite{KBRZh}, $f$ is a monomorphism.
\end{remark}

Now we are ready to \textbf{finish the proof of Bergman's centralizer theorem}.

\medskip

Consider the homomorphism from Proposition \ref{propberg}. Because $C$ is a centralizer of an element in $K\langle X\rangle\setminus K$, it has transcendence degree 1. Consider the homomorphism $\rho$ which sends $C$ to the ring of polynomials. The homomorphism has kernel zero, otherwise $\rho(C)$ will have a smaller transcendence degree.
Note that $C$ is integrally closed and finitely generated, therefore it can be embedded into the polynomial ring in one indeterminate. Since $C$ is integrally closed, it is isomorphic to the polynomial ring in one indeterminate.

\medskip

Consider the set of system of $C_{\ell}$, finitely generated subalgebras $\ell$ generators of the centralizer algebra $C$, then $C=\cup_{\ell}C_{\ell}$. Let $\overline{C_{\ell}}$ be the integral closure of $C_{\ell}$. Because $C_{\ell}$ is a one dimensional domain, which can be embedded to the ring of polynomial with one variable, then $\overline{C_{\ell}}$ the integral closure of those images is isomorphic to the ring of polynomial, i.e. $\overline{C_{\ell}}=K[z_{\ell}]$, where $z_{\ell}$ belongs to the integral closure of $C_{\ell}$. Consider sequence of $z_{\ell}$. We have $K[z_{\ell}]\subseteq K[z_{\ell+1}] \subseteq \cdots$. If $K[z_{\ell}] \subsetneq K[z_{\ell+1}]$, then the degree of $z_{\ell}$ is strictly less than the degree of $z_{\ell+1}$, otherwise $K[z_{\ell}]= K[z_{\ell+1}]$ and we can put $z_{\ell}=z_{\ell+1}$.
Hence this sequence stabilizes for some element $x$ and it shows that $K[z]$ is the needed centralizer.

\section{On the rationality of degree one subfields in the skew field of fractions of generic matrices}    \label{Discourtions}

The following open problem will be discussed in this final section along with some potential solutions. Unless otherwise stated, we still use the same notations from the previous section.

\medskip

\begin{problem}
Consider the algebra of generic matrices $K\{X\}$ of order $s$. Consider its	skew field of fractions, $\Frac(K\{X\})$ \cite{Kol00}, and let $\mathcal{K}$ be a subfield of $\Frac(K\{X\})$ of transcendence degree one over the base field $K$. Now the question is whether we can say $\mathcal{K}$ is isomorphic to the field of rational functions over $K$. In other words, do we have $\mathcal{K}\cong K(t)$?
\end{problem}

\medskip

Let $K\{X\}$ be the algebra of generic matrices of a large enough prime order $s:=p$. Let $\Lambda$ be the diagonal generic matrix $\Lambda=\diag{(\lambda_1,\dots,\lambda_s)}$ in $K\{X\}$, where the transcendence degree satisfies $\Trdeg K[\lambda_i]=1$. Now let $N$ be another generic matrix, whose coefficients are algebraically independent from $\lambda_1,\dots,\lambda_s$. This means that if $R$ is a ring of all coefficients of $N$, with
$\Trdeg(R)=s^2$, then 
$$\Trdeg R[\lambda_1,\dots,\lambda_s]=s^2+\Trdeg{K[\lambda_1,\dots,\lambda_s]}.$$

\begin{prop}\label{ideal}
Let us consider the generic matrices $f$ and $g$.
\begin{enumerate}
\item[a)] Let $K[f_{ij}]$ be a commutative ring, and $I=\langle f_{1i}\rangle\lhd K[f_{ij}]$, for $(i>1)$, be an ideal of $K[f_{ij}]$. Then we have $K[f_{11}]\cap I=0$.
\item[b)] Let $K[f_{ij},g_{ij}]$ be a commutative ring, and $J=\langle f_{1j},g_{1j}\rangle\lhd K[f_{ij},g_{ij}]$ when $(i,j>1)$. If $f$ and $g$ are algebraic dependent on $e_1$, namely there exists a polynomial $P$ with $P(f_{11},g_{11})=0$, then $K[f_{11},g_{11}]\cap J=0$.
\end{enumerate}
\end{prop}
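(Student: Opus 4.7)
The plan is to prove both claims via explicit specialization (evaluation) homomorphisms that annihilate the relevant ideal while restricting to the identity on the subring of interest, thereby forcing the intersection to be zero. This is the standard ``substitution argument'' in a polynomial ring, and the only delicate point is establishing that the substitution is well defined given the algebraic dependencies present.

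For part (a), I would view $k[f_{ij}]$ as the polynomial $k$-algebra freely generated by the entries $f_{ij}$ of the conjugated generic matrix $\overline{f}$, so that these entries are algebraically independent over $k$. Define the $k$-algebra endomorphism $\phi\colon k[f_{ij}]\to k[f_{ij}]$ on generators by $\phi(f_{1i})=0$ for $i>1$ and $\phi(f_{ij})=f_{ij}$ in all other cases. Since the $f_{ij}$ are free generators, $\phi$ is well defined; by construction $\phi(I)=0$ and $\phi$ restricts to the identity on $k[f_{11}]$. Hence every $p\in k[f_{11}]\cap I$ satisfies $p=\phi(p)=0$, which is the claim.

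For part (b) the strategy is analogous, but the algebraic dependence $P(f_{11},g_{11})=0$ must be accommodated. I would introduce the specialization $\psi\colon k[f_{ij},g_{ij}]\to k[f_{11},g_{11}]$ sending all entries other than $f_{11}$ and $g_{11}$ to zero, and fixing $f_{11},g_{11}$ themselves. Because $P$ involves only $f_{11}$ and $g_{11}$, which are fixed by $\psi$, the only nontrivial relation to be respected is preserved, so $\psi$ descends to a well-defined ring homomorphism. Since every generator $f_{1j},g_{1j}$ of $J$ (for $j>1$) lies in the kernel of $\psi$, we get $\psi(J)=0$, while $\psi|_{k[f_{11},g_{11}]}$ is the identity. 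Consequently, any element of $k[f_{11},g_{11}]\cap J$ is fixed by $\psi$ and simultaneously sent to zero, giving $k[f_{11},g_{11}]\cap J=0$.

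The main obstacle is verifying that $\psi$ really is a well-defined ring homomorphism, i.e.\ that the only polynomial relation tying $f_{11},g_{11}$ to the remaining entries is the stated $P(f_{11},g_{11})=0$, so that the other entries may be set to zero independently. I would justify this using the genericity of $\overline{f}$ and $\overline{g}$ as realized in the prior discussion with $\Lambda=\diag(\lambda_1,\dots,\lambda_s)$ and $N$: the off-$(1,1)$ coordinates of the conjugated matrices are algebraically independent from $f_{11},g_{11}$, exactly because the transcendence degree computation $\Trdeg R[\lambda_1,\dots,\lambda_s]=s^2+\Trdeg k[\lambda_1,\dots,\lambda_s]$ precludes any additional hidden relation. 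Once this independence is in hand the evaluation argument is immediate and both claims follow.
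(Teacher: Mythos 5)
The paper itself gives \emph{no proof} of Proposition~\ref{ideal}: it is stated bare and then invoked in Corollary~\ref{coro32} and the discussion that follows, so there is no internal argument against which to compare your attempt. That said, your proposal can be assessed on its own merits, and parts (a) and (b) fare very differently.

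Part (a) is sound once one supplies the missing lemma you implicitly assume, namely that the $s^2$ entries $f_{ij}$ of a generic conjugate $\overline{f}=N\Lambda N^{-1}$ are algebraically independent over $k$. This does follow from a dimension count in the spirit of the paper's preceding paragraph: the data $(N,\Lambda)$ carries transcendence degree $s^2+s$, the diagonal torus $D\mapsto ND^{-1}$ acts with $s$-dimensional stabilizer on $\overline{f}$, so $\Trdeg k(f_{ij})=s^2$, and $s^2$ generators of a field of transcendence degree $s^2$ are algebraically independent. Given this, $k[f_{ij}]$ is a polynomial ring, your specialization $\phi$ is automatically a ring homomorphism, and the retraction argument correctly yields $k[f_{11}]\cap I=0$. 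It would strengthen the write-up to state and justify this independence explicitly rather than assert it.

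Part (b) has a genuine gap. You correctly identify the crux --- well-definedness of the specialization $\psi$ --- but the justification you offer does not carry the weight placed on it. The claim that ``the only polynomial relation tying $f_{11},g_{11}$ to the remaining entries is $P(f_{11},g_{11})=0$'' is not true in the intended setting: $\overline{f}$ and $\overline{g}$ are simultaneously conjugated ($\overline{f}=N\Lambda_f N^{-1}$, $\overline{g}=N\Lambda_g N^{-1}$), so there are $2s^2$ entries but transcendence degree at most $s^2+2s-s$, hence roughly $s^2-s$ independent relations in $k[f_{ij},g_{ij}]$, among them the $s^2$ entries of $[\overline{f},\overline{g}]=0$ and the $s^2$ entries of any matrix identity $Q(\overline{f},\overline{g})=0$ witnessing the algebraic dependence of $f$ and $g$. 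Well-definedness of $\psi$ requires every such relation to specialize to $0$ in $k[f_{11},g_{11}]/(P)$; for instance, the $(1,1)$-entry of $Q(\overline{f},\overline{g})=0$ specializes to $Q(f_{11},g_{11})$, and showing that this lies in $(P)$ is essentially \emph{equivalent} to the claim $k[f_{11},g_{11}]\cap J=0$ that you are trying to prove. The transcendence-degree identity you quote ($\Trdeg R[\lambda_1,\dots,\lambda_s]=s^2+\Trdeg k[\lambda_1,\dots,\lambda_s]$) asserts independence of $N$ from $\Lambda$ and says nothing about the relation ideal of the ring $k[f_{ij},g_{ij}]$. As written, the argument for (b) is circular, and a correct proof needs a concrete analysis of the relation ideal (or a different route, e.g.\ a flatness/non-zero-divisor argument for $J$ over $k[f_{11},g_{11}]$).
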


\begin{coro}\label{coro32}
Let $\mathbb{A}$ be an algebra of generic matrices generated by $a_1,\dots,a_{s},a_{s+1}$. Let $f\in K[a_1,\dots,a_s]$ and $\varphi=a_{s+1}fa_{s+1}^{-1}$. Let $I=\langle \varphi_{1i}\rangle\lhd K[a_{1},\dots,a_{s+1}]$. Then $K[\varphi_{11}]\cap I=0$.
\end{coro}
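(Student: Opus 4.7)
The plan is to derive Corollary \ref{coro32} from Proposition \ref{ideal}(a) by a specialization argument. Since $I$ is generated by the off-diagonal first-row entries $\varphi_{12},\ldots,\varphi_{1n}$, it suffices, for a given polynomial $P\in k[t]$, to exhibit a family of specializations of the entries of $(a_1,\ldots,a_{s+1})$ under which $\varphi_{1i}=0$ for every $i>1$ while $\varphi_{11}$ assumes infinitely many distinct values. Any $P(\varphi_{11})\in I$ would then vanish after each such specialization, forcing $P\equiv 0$.

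The key observation is that the first row of $\varphi=a_{s+1}fa_{s+1}^{-1}$ equals $(e_1^{\top}a_{s+1})\,f\,a_{s+1}^{-1}$. If the first row $v^{\top}:=e_1^{\top}a_{s+1}$ of $a_{s+1}$ is specialized to a left eigenvector of $f$ with eigenvalue $\alpha$, then $v^{\top}f=\alpha v^{\top}$, and hence the first row of $\varphi$ collapses to $\alpha v^{\top}a_{s+1}^{-1}=\alpha e_1^{\top}$. In particular $\varphi_{11}=\alpha$ and $\varphi_{1i}=0$ for every $i>1$, which is exactly the required profile.

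It remains to see that this can be arranged for infinitely many $\alpha$. Because the entries of $a_{s+1}$ are algebraically independent from those of $a_1,\ldots,a_s$, we may prescribe $v^{\top}$ freely and choose the remaining rows of $a_{s+1}$ generically so as to preserve $\det(a_{s+1})\neq 0$. Independently, as $(a_1,\ldots,a_s)$ varies, the element $f\in k[a_1,\ldots,a_s]$ ranges over non-scalar specializations; by Proposition \ref{propeigen} the generic such $f$ has simple spectrum, so a left eigenvector with a prescribed eigenvalue $\alpha$ genuinely exists, and the eigenvalues attain infinitely many distinct values. For each admissible $\alpha$ we pick a corresponding left eigenvector and use it as the first row of an invertible specialization of $a_{s+1}$.

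The main obstacle I foresee lies in the simultaneity of the two constraints: $a_{s+1}$ must be invertible with a \emph{prescribed} first row, while $f$ must simultaneously be specialized so that exactly this row is a left eigenvector with a prescribed eigenvalue. These are Zariski-open conditions on two disjoint and algebraically independent groups of variables, so their combined locus is nonempty; what requires care is invoking Proposition \ref{propeigen} to guarantee simple spectrum on the $f$-side, and verifying that specialization commutes with membership in $I$ after localizing at $\det(a_{s+1})$ so that the entries $\varphi_{1i}$ become honest regular functions. Once this bookkeeping is in place, the reduction to Proposition \ref{ideal}(a) is immediate.
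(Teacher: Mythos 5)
Your argument is correct in outline, but it is a genuinely different proof from the paper's, and---despite your opening sentence---it does not in fact reduce to Proposition \ref{ideal}(a): as written, it is a self-contained specialization argument. The paper's proof is a two-line algebraic reduction: diagonalize $f=\tau\Lambda\tau^{-1}$, rewrite $\varphi=(a_{s+1}\tau)\Lambda(a_{s+1}\tau)^{-1}$, and observe that the product $a_{s+1}\tau$ plays the role of the independent conjugating matrix $N$ appearing in the setup for Proposition \ref{ideal}(a); the one terse step is the claim that $a_{s+1}\tau$ may be ``treated as a generic matrix'' because the entries of $a_{s+1}$ are algebraically independent of everything entering $\tau$. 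Your route instead specializes the first row of $a_{s+1}$ to a left eigenvector of $f$, which forces $\varphi_{1i}=0$ for $i>1$ and makes $\varphi_{11}$ equal to the corresponding eigenvalue, and then varies over such specializations to conclude $P\equiv 0$. The paper's argument is shorter once Proposition \ref{ideal}(a) is granted; yours has the merit of exposing the geometric content (the common vanishing locus of the $\varphi_{1i}$, $i>1$, is exactly where $e_1^{\top}a_{s+1}$ is a left eigenvector of $f$) and of avoiding any hand-waving about ``generic-ness'' of the product $a_{s+1}\tau$. Two points deserve tightening in your write-up. First, the ``infinitely many $\alpha$'' step should be justified explicitly: invoke the proposition in Section \ref{GenMatrix} that every eigenvalue of a non-scalar element of $k\{X\}$ is transcendental over $k$, so the eigenvalue function on your constrained subvariety cannot satisfy a nonzero polynomial relation over $k$. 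Second, since the entries of $\varphi$ involve $\det(a_{s+1})^{-1}$, the identity $P(\varphi_{11})=\sum c_i\varphi_{1i}$ lives in the localization at $\det(a_{s+1})$, and you should clear denominators (or work throughout in that localization) before specializing, precisely as you anticipate at the end of your note.
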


\begin{proof}
Note that we have $f=\tau \Lambda \tau^{-1}$ for some $\tau$ and a diagonal matrix $\Lambda$ by Proposition \ref{ideal}. Then $\varphi=(a_{s+1}\tau)\Lambda(a_{s+1}\tau)^{-1}$ and we can treat $(a_{s+1}\tau)$ as a generic matrix.
\end{proof}

\begin{thm}\label{thmover}
Let $C:=C(f;K\langle X\rangle)$ be the centralizer ring of $f\in K\langle X\rangle\setminus K$ and let $\overline{C}$ be the reduction of generic matrices, and $\overline{\overline{C}}$ be the reduction with respect to the first eigenvalue action. Then we have $\overline{\overline{C}}\cong C$.
\end{thm}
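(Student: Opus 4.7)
The plan is to establish that the composition $C \to \overline{C} \to \overline{\overline{C}}$ is an isomorphism by proving injectivity at each stage; surjectivity is automatic since $\overline{\overline{C}}$ is defined as the image. For the first stage, I would invoke Lemma~2.3 of our previous work \cite{KBRZh}: for a big enough prime order $n$, the canonical map $\pi: F_n \to k\{X\}$ restricts to an embedding $C \hookrightarrow \overline{C}$. Hence the only substantive work is to show injectivity of the further ``first eigenvalue'' reduction $\overline{C} \to \overline{\overline{C}}$.

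Next, I would exploit Proposition \ref{propeigen}: since $n$ is a sufficiently large prime, $\overline{f}$ has pairwise distinct eigenvalues and can be diagonalized over a suitable extension as $\overline{f} = \tau \Lambda \tau^{-1}$ with $\Lambda = \diag(\lambda_1, \dots, \lambda_n)$. Every $\overline{g} \in \overline{C}$ commutes with $\overline{f}$ and so is simultaneously diagonalized in the same basis, say $\overline{g} = \tau \diag(\mu_1,\dots,\mu_n) \tau^{-1}$. In this basis the ``first eigenvalue action'' becomes reduction modulo the ideal $I = \langle \varphi_{1i} \rangle$ for $i>1$, which is precisely the quotient appearing in Corollary \ref{coro32}.

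I would then apply Corollary \ref{coro32}, treating $\tau$ as an auxiliary generic matrix $a_{s+1}$: for any $\varphi \in \overline{C}$ obtained by conjugating a diagonal $\Lambda$, we get $k[\varphi_{11}] \cap I = 0$. For pairs of elements, the matching Proposition \ref{ideal}(b) applies because $\overline{C}$ has transcendence degree one (as shown in \cite{KBRZh}), so any polynomial relation $P(\overline{f}, \overline{g}) = 0$ restricted to the first diagonal entry yields $P(\lambda_1, \mu_1) = 0$, supplying the required algebraic dependence hypothesis. This disjointness upgrades to injectivity of the first-eigenvalue projection on the whole commutative subalgebra $\overline{C}$, and combining the two stages finishes the proof.

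The main obstacle I anticipate is the rigorous justification of the last step: identifying the ``first eigenvalue action'' with the purely algebraic quotient of Corollary \ref{coro32} when the diagonalizing matrix $\tau$ is not itself one of the original generators of $k\{X\}$. The resolution should be to enlarge the generator set by one fresh generic matrix playing the role of $\tau$, verify that the factorization $\overline{g} = \tau \diag(\mu_1,\dots,\mu_n)\tau^{-1}$ survives to the reduction as an equality of $(1,1)$-entries, and propagate algebraic dependence from the matrix level down to eigenvalues via simultaneous diagonalizability together with the distinct-eigenvalue hypothesis of Proposition \ref{propeigen}.
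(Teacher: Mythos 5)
Your approach differs from the paper's proof in a key way, and I think it leaves the crucial step unjustified.

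The paper's argument for injectivity of the first-eigenvalue reduction is short and hinges on Proposition \ref{propCH}(a): a nonzero element of the algebra of generic matrices has an \emph{irreducible} minimal polynomial. Hence if $\overline{\overline{P(g_1,g_2)}}=0$, then $P(g_1,g_2)$ has a vanishing (first) eigenvalue; but an irreducible minimal polynomial cannot have $0$ as a root unless it is $x$ itself, which forces $P(g_1,g_2)=0$. Combined with the triviality of the forward implication and the already-established $\overline{C}\cong C$, this finishes the proof in a few lines.

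Your argument takes a genuinely different route, via Proposition \ref{ideal}(b) and Corollary \ref{coro32}. The issue is that the disjointness $k[f_{11},g_{11}]\cap J=0$ does not by itself deliver injectivity of $\overline{C}\to\overline{\overline{C}}$. Unwinding it, the kernel of the first-eigenvalue projection consists of those $h=P(\overline{f},\overline{g})\in\overline{C}$ with $P(f_{11},g_{11})\in J$; the disjointness lets you conclude $P(f_{11},g_{11})=0$ on the nose, but you still need the implication ``$P(\lambda_1(f),\lambda_1(g))=0$ forces $P(\overline{f},\overline{g})=0$ as a matrix.'' That is exactly the step the paper supplies with the irreducible-minimal-polynomial observation (equivalently, via Proposition \ref{propeigen} one can argue that all $n$ eigenvalues of $P(\overline{f},\overline{g})$ are Galois conjugate, so one vanishing eigenvalue forces all of them to vanish). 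You invoke Proposition \ref{propeigen} only to justify simultaneous diagonalization, not to close this loop, and your closing sentence about ``propagating algebraic dependence from the matrix level down to eigenvalues'' only re-states the easy (forward) direction, not the one needed for injectivity. You should add the minimal-polynomial irreducibility argument explicitly, at which point Proposition \ref{ideal} and Corollary \ref{coro32} become unnecessary for this particular theorem (they are used later, for the unnamed proposition $k[f_{11},g_{11}] \bmod J \cong k[f,g]$, and that proposition in turn relies on Theorem \ref{thmover}, so leaning on them here risks circularity).
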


\begin{proof}
Recall that we already have shown $\overline{C}\cong C$  satisfies \cite{KBRZh}. If we have $P(g_1,g_2)=0$, then clearly
$P(\lambda_1(g_1),\lambda_2(g_2))=0$ in the reduction on the first eigenvalue action. Suppose $\overline{\overline{P(g_1,g_2)}}=0$. Then $P(g_1,g_2)$ is an element of the ring of generic matrices with at least one zero eigenvalues and because the minimal polynomial is irreducible, and hence implies that $P(g_1,g_2)=0$. This means that any reduction with $\lambda_1$ satisfies the equation completely, and it is exactly what we were looking for!
\end{proof}

\medskip

Consider $\overline{C}$ as before and for any $\overline{g}=(g_{ij})\in\overline{C}$, let us investigate $g_{11}$. As before suppose there is a polynomial $P$ with coefficients in $K$, such that $P(f,g)=0$. By Proposition \ref{ideal}, we make the intersection of ideals $J=\langle f_{1j},g_{1j}\rangle$ for $(j>1)$ and even stronger, meaning that $K[f_{11},g_{11}]\cap J=0$ and from Theorem \ref{thmover} we can retrieve the following proposition.

\begin{prop}
$$K[f_{11},g_{11}] \cong K[f,g] \mod J.$$
\end{prop}

\begin{proof}
Consider matrices $f$, $g$ in $\mod J$ respectively with $\overline{\overline{f}}$ and $\overline{\overline{g}}$ as follows:
$$\overline{\overline{f}}=\begin{pmatrix}
   \lambda_1 & 0 & \cdots & 0 \\
   * & * & * & * \\
   * & * & * & * \\
   * & * & * & *
  \end{pmatrix}, ~~
  \overline{\overline{g}}=\begin{pmatrix}
   \lambda_2 & 0 & \cdots & 0 \\
   * & * & * & * \\
   * & * & * & * \\
   * & * & * & *
  \end{pmatrix}.$$
Then for any $P(\overline{\overline{f}},\overline{\overline{g}}) \mod J$, we have
$$\overline{\overline{f}}=\begin{pmatrix}
   P(\lambda_1,\lambda_2) & 0 & \cdots & 0 \\
   * & * & * & * \\
   * & * & * & * \\
   * & * & * & *
  \end{pmatrix}$$
\end{proof}

Now, let us propose another approach as follows. Consider $K(f,g)$ as before and let us extend the algebra of generic matrices by a new matrix $T$, independent from the previous ones. Consider conjugation of $P(f,g)$ with $T$ by $T P(f,g) T^{-1}$, and let $\tilde{f}=TfT^{-1}$ and $\tilde{g}=TgT^{-1}$. By Corollary \ref{coro32}, we have 
$$K[f_{11},g_{11}]\cap J=0,$$ 
which means that 
$$P(f_{11},g_{11})=0 \mod J,$$ 
for $f_{11}$ and $g_{11}$ polynomials over the commutative ring generated by all entries of $K[f,g]$ and $T$. Then $\Frac (K(f,g))$ can be embedded into the fractional field of the ring of polynomials and according to L\"{u}roth's Theorem, $\Frac(K(f,g))$ (hence $\Frac(C)$) will be isomorphic to the field of rational functions in one variable.

Note well that this will not guarantee the rationality of our field, and there are counterexamples in this situation. However, this approach seems to be useful for the leading term analysis.

\section*{Data availability statement}
Our manuscript has no associate data.

\end{document}